\documentclass{article}
%---------------------------------------------------------------------
\usepackage{amsmath,amssymb,latexsym,amsfonts,amsthm}
\usepackage{epsfig,color,geometry,graphicx}
\usepackage{float,bbm}
\usepackage{epstopdf}
%---------------------------------------------------------------------
\numberwithin{equation}{section} % Include section number un the equations
%--------------------------------------------------------------------
% Macros generales
% Operators
\newcommand{\curl}{\mathop{\mathbf{curl}}\nolimits}
\newcommand{\dive}{\mathop{\mathrm{div}}\nolimits}
\newcommand{\grad}{\mathop{\mathbf{\nabla}}\nolimits}

%Delimiters
\newcommand{\norm}[1]{\left\lVert#1\right\rVert}
\newcommand{\set}[1]{\left\{#1\right\}}
\newcommand{\produ}[1]{\left\langle#1\right\rangle}
\newcommand{\inner}[1]{\left(#1\right)}
% Special Sets
\newcommand{\M}{M}
\newcommand{\X}{X}
\newcommand{\Y}{Y}
\newcommand{\R}{\mathbb{R}}
\newcommand{\oc}{\Omega_{{}_{\mathrm C}\!}}
\newcommand{\od}{\Omega_{{}_{\mathrm D}\!}}
\newcommand{\mo}{M(\od)}
%Special functional spaces
\renewcommand{\H}{\mathrm{H}}
\renewcommand{\L}{\mathrm{L}}
\newcommand{\hcurlo}{\mathbf{H}(\curl;\Omega)}
\newcommand{\hocurlod}{\mathbf{H}_0(\curl;\od)}
\newcommand{\hcurloc}{\mathbf{H}(\curl;\oc)}
\newcommand{\hocurlo}{\mathbf{H}_0(\curl;\Omega)}
\newcommand{\hcurlod}{\mathbf{H}(\curl;\od)}
\newcommand{\hunod}{\H^1(\od)}
\newcommand{\ldoso}{\L^2(\Omega)}
\newcommand{\ldosod}{\L^2(\od)}
\newcommand{\ldoshocurlo}{\L^2(0,T;\hocurlo)}
\newcommand{\ldosmo}{\L^2(0,T;\mo)}

% Vector variables
\newcommand{\cero}{\boldsymbol{0}}
\newcommand{\ds}{\,ds}
\newcommand{\dt}{\,dt}
\newcommand{\bn}{\mathbf{n}}
\newcommand{\xn}{\mathbf{x}}
\newcommand{\zn}{\mathbf{z}}
\newcommand{\un}{\mathbf{u}}
\newcommand{\fn}{\mathbf{f}}
\newcommand{\vn}{\mathbf{v}}
\newcommand{\E}{\mathcal{E}}

\newcommand{\wn}{\mathbf{w}}
\newcommand{\Jn}{\mathbf{J}}
\newcommand{\Hn}{\mathbf{H}}

\newcommand{\bJ}{\mathbf J}
\newcommand{\bE}{\mathbf E}

\newcommand{\bH}{\mathbf H}
\newcommand{\pct}{\mathrm{a.e.}}
%----new definitions----
\newcommand{\gt}{\boldsymbol{\gamma}_{\tau}}
\newcommand{\Vd}{V_{{}_{\mathrm D}\!}}
\newcommand{\vc}{\vn_{{}_{\!\mathrm C}\!}}
\newcommand{\vd}{\vn_{{}_{\!\mathrm D}\!}}

%%%nondegenerate
\newcommand{\VV}{V}

\newcommand{\hounoO}{\H_0^1(\Omega)}
\newcommand{\lodoso}{\L_0^2(\Omega)}
\newcommand{\ldoshounod}{\L^2(0,T;\hounoO^d)}
\newcommand{\ldoslodoso}{\L^2(0,T;\lodoso)}
\newcommand{\hunolodoso}{\H^1(0,T;\lodoso)}
\newcommand{\hmunoo}{\H^{-1}(\Omega)}
\newcommand{\ldosldoso}{\L^2(0,T;\ldoso)}

\newcommand{\houno}{\H_0^1(\Omega)}
%%% puertos
\newcommand{\Gc}{\Gamma_{{}_{\!\mathrm C}\!}}
\newcommand{\Gd}{\Gamma_{{}_{\!\mathrm D}\!}}
\newcommand{\GI}{\Gamma_{{}_{\!\mathrm I}\!}}
\newcommand{\GE}{\Gamma_{{}_{\!\mathrm E}\!}}
\newcommand{\GJ}{\Gamma_{{}_{\!\mathrm J}\!}}

\newcommand{\VVd}{V_{{}_{\mathrm D}\!}}
\newcommand{\nn}{\boldsymbol{n}}
\newcommand{\HGccurloc}{\mathbf{H}_{\Gc}(\curl;\oc)}
\newcommand{\HGdcurlod}{\mathbf{H}_{\Gd}(\curl;\od)}
\newcommand{\HGIcurlod}{\mathbf{H}_{\GI}(\curl;\od)}
\newcommand{\HHGIcurlod}{\widehat{\mathbf{H}}_{\GI}(\curl;\od)}
\newcommand{\HGIcurlood}{{\mathbf{H}}_{\GI}(\curl^{\cero};\od)}
\newcommand{\HGddivood}{{\mathbf{H}}_{\Gd}(\dive^{0}_{\varepsilon};\od)}
\newcommand{\hdivod}{\mathrm{H}(\dive,\od)}
\newcommand{\HHdivod}{\widehat{\mathrm{H}}(\dive;\od)}
\newcommand{\LdosGd}{\mathrm{L}^2(\Gd)}
\newcommand{\HH}{\mathbb{H}}
\newcommand{\wnd}{\wn_{{}_{\mathrm D}\!}}
\newcommand{\vnd}{\vn_{{}_{\mathrm D}\!}}
\newcommand{\znd}{\zn_{{}_{\mathrm D}\!}}
\newcommand{\gtc}{\boldsymbol{\gamma}_\tau^{{}^{\mathrm C}\!}}
\newcommand{\gtd}{\boldsymbol{\gamma}_\tau^{{}^{\mathrm D}\!}}
\newcommand{\rangoGtoc}{\mathrm{H}^{-1/2}(\dive_{\tau};\partial\oc)}
\newcommand{\rangoGtod}{\mathrm{H}^{-1/2}(\dive_{\tau};\partial\od)}
\newcommand{\etan}{\boldsymbol{\eta}}
\newcommand{\LL}{\mathcal{L}}

\newcommand{\hdive}{\mathbf{H}(\dive;\Omega)}
\newcommand{\hodive}{\mathbf{H}_0(\dive;\Omega)}
%---------------------------------------------------------------------
\newtheorem{theorem}{Theorem}[section]

\newtheorem{lemma}{Lemma}[section]
\newtheorem{corollary}{Corollary}[section]

\newtheorem{problem}{Problem}[section]
\newtheorem{remark}[theorem]{Remark}
%---------------------------------------------------------------------
\def\thanksRamiroChristian{Universidad del Cauca, Popay\'an, Colombia, 
 email: {\tt rmacevedo@unicauca.edu.co}, {\tt christiancamilo@unicauca.edu.co}}
                  
\def\thanksBibiana{Universidad Nacional sede Medell\'in, Colombia
email: {\tt blopezr@unal.edu.co}.}  
%---------------------------------------------------------------------
\begin{document}
%%%---------------------------------------------------------------------
\title{
Well-posedness for a family of degenerate 
parabolic mixed equations}
%\subtitle{Do you have a subtitle?\\ If so, write it here}

%\titlerunning{Short form of title}        % if too long for running head

\author{
{\sc Ramiro Acevedo}\thanks{\thanksRamiroChristian}\quad
{\sc Christian G\'omez}$^{*,\dag}$\quad  
{\sc Bibiana L\'opez-Rodr\'{\i}guez}\thanks{\thanksBibiana}
}
\date{11 Mayo 2020}
% The correct dates will be entered by the editor

\maketitle

%-----------------------------------------------------------------------
\begin{abstract}
The aim of this work is to show an abstract framework to analyze a family of linear degenerate parabolic mixed equations. We combine the theory for the degenerate parabolic equations (see, \textit{e.g.}, \cite{showalter}) with the classical Babu$\check{\text{s}}$ka-Brezzi theory for linear mixed stationary equations to deduce sufficient conditions to prove the well-posedness of the problem. Finally, we illustrate the application of the abstract framework through examples that come from physical science applications including fluid dynamics models and electromagnetic problems. 
\end{abstract}

\textbf{keyword}
Well-posedness, time dependent problems, parabolic degenerate equations, mixed equations, Stokes problem, eddy current model. 

%\MSC[2010] 78M10\sep 65M60

%\end{frontmatter}

%\linenumbers

%---------------------------------------------------------------------
%\section{Introduction}
\section{Introduction}
{This work is concerned with an abstract theory to study the existence and uniqueness of solution of a family of linear mixed degenerate evolutions problems. 
This kind of degenerate {system arises} in many applications to coupled multi-physics models, for instance in electromagnetic applications where it is necessary to study the problem in two types of regions (the conductor and the insulator) and the electromagnetic fields satisfy some divergence-free or curl-free condition. 
Furthermore, the study of these equations are relevant for the numerical analysis in {finite-dimensional} approximations, \textit{v.g.}, approximations that correspond to finite element methods. }

The idea of combining  mixed variational formulation with time dependent evolution problem is not new, starting from the 1980s, for instance, \cite{JT,BR}, and more recently, to study a dynamics fluid problem \cite{BG} and electromagnetic applications \cite{A1,A2,blrs:13,MS1}.
\textit{Bernardi} \& \textit{Raugel} \cite{BR} have introduced some abstract framework for an usual (non-degenerate) mixed parabolic equation inspired on 
a dynamics fluid model called the Stokes problem. More precisely, they analyze the existence and uniqueness of solution of the following mixed parabolic problem: 

\noindent find $u\in \L^2(0,T;\X)$ and $\lambda\in \mathcal{D}'(0,T;\M)$ such that: 
\begin{align}
&\dfrac{d}{dt} {(u(t),v)_{\Y}} + b(v,\lambda(t)) + a(u(t),v) = \left\langle f(t),v\right\rangle_{\X}&& \forall v\in \X,\nonumber\\
&b(u(t),\mu)=0&& \forall \mu\in \M,\label{abstracto-BR}\\
&u(0)=u_0,\nonumber
\end{align}
where $\X$, $\Y$ and $\M$ are real Hilbert spaces with the embedding ${\X\subseteq \Y}$ is continuous and dense, $\mathcal{D}'(0,T;\M)$ is the space of $\M$-valued distributions on $[0,T]$, $(\cdot,\cdot)_{\Y}$ is the inner product on $\Y$, $a:\X\times \X\to \R$, $b:\X\times \M\to \R$ are continuous bilinear forms, $u_0\in\Y$ and $f\in\L^2(0,T;\X')$.

However, it is not possible to apply this abstract theory to the formulations arising from electromagnetic problems studied in \cite{A1,A2,blrs:13}, because in these cases the first term inside of the time-derivative is not an inner product in the whole space $Y$, namely that problems are degenerate.  Similarly, that abstract theory can not be applied to the formulation analyzed in \cite{MS1}, because in this formulation the right-hand term of the second equation in \eqref{abstracto-BR} is non-zero. 

This paper is devoted to analyze a new abstract framework for a family of mixed parabolic equations including both degenerate and non-homogeneous cases as the problems mentioned above, which leads to a more general problem of  \eqref{abstracto-BR}. We combine the linear degenerate parabolic equation theory and the classical Babu$\check{\text{s}}$ka-Brezzi Theory to prove the existence and uniqueness of solution, by assuming some reasonable conditions inspired by the application problems. More precisely, {the} first step is to apply the theory for degenerate parabolic problems to the reduced equation to the kernel of the bilinear form given in the second equation, where {the bilinear form of the first equation must verify} a G\r{a}rding-type inequality. After that, we use {an} inf-sup condition for the bilinear form in the second equation, to prove the existence of the Lagrange multiplier of the problem. The uniqueness of solution is obtained from the time regularity of the Lagrange Multiplier.

The application problems of the theory are organized depending on their physical origin. Firstly, we show that a fluid dynamics model (the classical linear time-dependent Stokes problem) can be studied as a particular case of the theory. This example proves that the usual (non-degenerate) mixed parabolic equations can be analyzed with our theory.
The second set of applications comes from an electromagnetic model called \textit{eddy current model}, which is obtained from Maxwell equations by assuming that the current displacement can be dropped in the Law of Ampere-Maxwell. The two considered variational formulations have a main unknown a time-primitive of the electric field, but in the first case{,} the conducting domain is compactly included in the computational domain, while in the second one, since a more general case, realistic boundary conditions can be considered. The well-posedness of this last model for the eddy current problem was studied in \cite{blrs:13} by showing that the model is equivalent to the other system of equations
{in which} the existence and uniqueness of solution is previously known. In contrast, our theory allows us to deduce directly the well-posedness of the model without the need to resort to an equivalent problem.  

Other studies {about abstract framework} theories have been proposed in recent years for evolution mixed problems. For instance, in \cite{showalterpaper} the problem has been studied by using the semigroups theory with application to the time-dependent Stokes flow, {and the analysis} for linear and nonlinear case{s} by taking a model problem in fluid-flow on poroelastic porous medium was presented in \cite{VS}.

The outline of the paper is as follows: Section \ref{continuo} is devoted to obtain the abstract framework for a family of mixed degenerate parabolic equations and its analysis of existence and uniqueness of solution. In Section \ref{aplicaciones}, we show the application of the theory to a dynamic fluids model {(the time-dependent Stokes problem) }and to electromagnetic problems (the eddy current model with internal conductor and with input current), where we use the abstract theory to deduce the well-posedness for both models. Finally, in Section~\ref{conclusiones} we show a brief list of conclusions of the work.

%---------------------------------------------------------------------
%\setcounter{equation}{0}
\section{An abstract degenerate mixed parabolic problem}
\label{continuo}

Let $\M$ be real reflexive Banach space and Let $\X$ and $\Y$ be two real Hilbert spaces such that $\X$ is contained in $\Y$ with a continuous and dense embedding. Let $a:\X\times \X\to \R$ and $b:\X\times \M\to \R$ be two continuous bilinear forms and $R: \Y\to \Y'$ a continuous and linear operator. Let $A$ be the linear and continuous operator induced by the bilinear form $a$, \textit{i.e.}, $A:\X\to \X'$ given by
\begin{equation*}
\langle Au,w \rangle_{\X}:=a(u,w)\qquad \forall u,w\in\X.
\end{equation*}

Let $V$ be the kernel of the bilinear form $b$, \textit{i.e.},
\begin{equation*}
V:=\left\{v\in \X: b(v,\mu)=0\ \ \forall\mu\in \M\right\},
\end{equation*}
and denote by $W$ its clausure with respect to the $\Y$-norm, \textit{i.e.},
\begin{equation*}
W:=\overline{V}^{\| \cdot \|_{\Y}}.
\end{equation*}

We consider now the following problem, which is the main problem of this section: Given $u_0\in\Y$, $f\in\L^2(0,T;\X')$ and $g\in\L^2(0,T;\M')$, the continuous problem is 
\begin{problem}
\label{PC1}
Find $u\in\L^2(0,T;\X)$ and $\lambda\in\L^2(0,T;\M)$ satisfying the following equations:
\begin{align*}
& \frac{d}{dt}\left[\langle Ru(t),v\rangle_{\Y}+ b(v,\lambda(t))\right]+\langle Au(t),v\rangle_{\X}
=\langle f(t),v\rangle_{\X}&& \forall v\in\X \quad\text{in }\mathcal{D}'(0,T),\\
& b(u(t),\mu)=\langle g(t),\mu\rangle_{\M}&& \forall\mu\in\M,\\
& \langle Ru(0),v\rangle_{\Y}=\langle Ru_0,v\rangle_{\Y}&& \forall v\in \Y.
\end{align*}
\end{problem}

Next, in order to show that the previous problem is a well-posed problem, we additionally assume that the spaces, 
forms, operators and data of problem have the following properties:
\begin{itemize}
\item[H1.] The bilinear form $b$ satisfies a continuous \textit{inf--sup} condition, \textit{i.e.}, there exists $\beta>0$ such that
\begin{equation*}
\underset{v\in\X}{\sup}\frac{b(v,\mu)}{\|v\|_{\X}}\geq \beta \|\mu\|_{\M}\quad\forall \mu\in\M.
\end{equation*}
\item[H2.] $R$ is self-adjoint and monotone on $V$, \textit{i.e.}, 
\begin{equation*}
\langle Rv,w \rangle_{\Y}=\langle Rw,v \rangle_{\Y},\qquad \langle  Rv,v \rangle_{\Y}\geq 0\ \qquad \forall v,w\in V.
\end{equation*}
\item[H3.] The operator $A$ is self-adjoint on $V$, \textit{i.e.},
\begin{equation*}
\langle Av,w \rangle_{\X}=\langle Aw,v \rangle_{\X}\ \qquad \forall v,w\in V.
\end{equation*}
\item[H4.] There exist $\gamma>0$ and  $\alpha>0$ such that
\begin{equation}\label{garding1}
\langle Av,v \rangle_{\X}+\gamma\langle Rv,v \rangle_{\Y}\geq \alpha \|v\|^2_{\X}\ \qquad \forall v\in V.
\end{equation}
\item[H5.] The initial data $u_0$ belongs to $W$.
\item[H6.] The data function $g$ belongs to $\H^1(0,T;\M')$.
\end{itemize}

Now, we show the main result of this {paper}. From now on, we will denote by $C$ a generic constant which is not necessarily the same at each occurrence.
\begin{theorem}
\label{main}
Let us assume that assumptions $\mathrm{H}1-\mathrm{H}6$ hold true. Then the Problem~\ref{PC1} has a unique solution $(u,\lambda)$ %$u\in\L^2(0,T;\X)$ and 
with $\lambda\in \H^1(0,T;\M)$, and there exists a constant $C>0$ such that
\begin{equation*}
\|u\|_{\L^2(0,T;\X)} + \|\lambda\|_{\L^2(0,T;\M)} 
\leq C 
\left\{
\|f\|_{\L^2(0,T;\X')} + \|g\|_{\H^1(0,T;\M')} + \|u_0\|_{\Y}
\right\}.
\end{equation*}
Moreover, $\lambda(0)=0$.
\end{theorem}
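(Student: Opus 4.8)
The plan is to follow the two-step strategy announced in the introduction: first solve a reduced, unconstrained degenerate parabolic problem posed on the kernel space $V$ (and its $\Y$-closure $W$), then recover the Lagrange multiplier $\lambda$ via the inf--sup condition H1, and finally upgrade the time regularity of $\lambda$ to obtain uniqueness and $\lambda(0)=0$. The first reduction requires absorbing the inhomogeneous constraint $b(u(t),\mu)=\langle g(t),\mu\rangle_\M$. The plan is to use H1 together with the standard Babu\v{s}ka--Brezzi lifting to produce, for a.e. $t$, an element $u_g(t)\in V^\perp$ (the orthogonal complement in $\X$) with $b(u_g(t),\mu)=\langle g(t),\mu\rangle_\M$ and $\|u_g(t)\|_\X\le \beta^{-1}\|g(t)\|_{\M'}$; since $g\in\H^1(0,T;\M')$ by H6, the lift inherits $u_g\in\H^1(0,T;\X)$, so writing $u=u_g+\tilde u$ with $\tilde u(t)\in V$ reduces Problem~\ref{PC1} to finding $\tilde u\in\L^2(0,T;V)$ solving, for all $v\in V$ (so that the $b(v,\lambda)$ term drops out),
\begin{equation*}
\frac{d}{dt}\langle R\tilde u(t),v\rangle_\Y + \langle A\tilde u(t),v\rangle_\X = \langle f(t),v\rangle_\X - \langle Ru_g'(t),v\rangle_\Y - \langle Au_g(t),v\rangle_\X =: \langle \tilde f(t),v\rangle_\X,
\end{equation*}
with the initial condition $\langle R\tilde u(0),v\rangle_\Y = \langle R(u_0-u_g(0)),v\rangle_\Y$ and $u_0-u_g(0)\in W$ by H5 (noting $u_g(0)$ lies in the $\X$-closure complement; one must check the initial data lands in $W$, which is where H5 and the structure of the decomposition interact and deserves care).

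The core of the argument is then the reduced degenerate parabolic problem on $V$. The plan is to invoke the abstract theory for degenerate parabolic equations (Showalter, \cite{showalter}): with $R$ self-adjoint and monotone on $V$ (H2), $R$ defines a degenerate inner product / seminorm on $V$, and H4 is precisely the G\r{a}rding-type inequality $\langle Av,v\rangle_\X + \gamma\langle Rv,v\rangle_\Y\ge\alpha\|v\|_\X^2$ needed for coercivity after the exponential shift $\tilde u(t)=e^{\gamma t}\hat u(t)$. Applying the Lions-type existence theorem for the triple $(V,\Y\text{-via-}R,\ldots)$ yields a unique $\tilde u\in\L^2(0,T;V)$ with $R\tilde u\in\H^1(0,T;V')$ (or the appropriate weak formulation), together with the energy estimate $\|\tilde u\|_{\L^2(0,T;V)}\le C(\|\tilde f\|_{\L^2(0,T;V')}+\|R(u_0-u_g(0))\|)$; combining with the lift bound gives the stated a priori estimate for $u=\tilde u+u_g$ in terms of $\|f\|_{\L^2(0,T;\X')}$, $\|g\|_{\H^1(0,T;\M')}$, $\|u_0\|_\Y$.

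To recover $\lambda$, the plan is to define, for a.e. $t$, the functional $\ell(t)\in\X'$ by $\langle\ell(t),v\rangle_\X := \langle f(t),v\rangle_\X - \langle Au(t),v\rangle_\X - \frac{d}{dt}\langle Ru(t),v\rangle_\Y$; by the reduced equation $\ell(t)$ vanishes on $V$, hence lies in the polar $V^\circ$, and H1 (the inf--sup condition) gives a unique $\lambda(t)\in\M$ with $b(v,\lambda(t))=-\langle\ell(t),v\rangle_\X$ for all $v\in\X$ and $\|\lambda(t)\|_\M\le\beta^{-1}\|\ell(t)\|_{\X'}$, which delivers $\lambda\in\L^2(0,T;\M)$ and its bound. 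The remaining and most delicate point — indeed I expect this to be the main obstacle — is to show $\lambda\in\H^1(0,T;\M)$ and $\lambda(0)=0$: one must differentiate the relation $b(v,\lambda(t)) = -\langle f(t),v\rangle_\X + \langle Au(t),v\rangle_\X + \frac{d}{dt}\langle Ru(t),v\rangle_\Y$ in time, which forces one to make sense of $\frac{d^2}{dt^2}\langle Ru(t),v\rangle_\Y$; the trick will be to integrate the original equation in time first, i.e. work with the primitive, so that $b(v,\Lambda(t))$ (with $\Lambda(t)=\int_0^t\lambda$) equals $\langle Ru(t),v\rangle_\Y$ plus an $\H^1$-in-time quantity, then use H1 to transfer regularity from the right-hand side to $\Lambda$, obtaining $\Lambda\in\H^1$ hence $\lambda\in\L^2$ and bootstrapping to $\lambda\in\H^1$; evaluating the time-integrated identity at $t=0$ and using the initial condition $\langle Ru(0)-Ru_0,v\rangle_\Y=0$ together with $b(u(t),\mu)=\langle g(t),\mu\rangle$ and $u_0\in W$ yields $b(v,\Lambda(0))=0$ for all $v$, hence $\Lambda(0)=0$, i.e. $\lambda(0)=0$ after the regularity is in hand. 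Uniqueness of $(u,\lambda)$ then follows: uniqueness of $u$ from the reduced problem's uniqueness (testing the homogeneous equation against $\tilde u$ and using H4), and uniqueness of $\lambda$ from H1 applied to the difference.
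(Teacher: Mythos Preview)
Your overall strategy matches the paper's proof almost step for step: lift the constraint via the inf--sup condition (the paper's $z$, your $u_g$), solve the reduced degenerate parabolic problem on $V$ by invoking Showalter's theory under H2--H4, and recover $\lambda$ through H1. The estimates and the uniqueness argument are also handled the same way.

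The one place where your write-up drifts from the paper is the recovery of $\lambda$ and its $\H^1$-regularity. You first define $\ell(t)$ pointwise, correctly flag that the term $\frac{d}{dt}\langle Ru(t),v\rangle_\Y$ is only distributional, and then propose to integrate and work with the primitive $\Lambda(t)=\int_0^t\lambda$. That extra primitive is a misstep: since the time derivative in Problem~\ref{PC1} already sits on the \emph{sum} $\langle Ru(t),v\rangle_\Y + b(v,\lambda(t))$, integrating the equation once from $0$ to $t$ yields an identity for $b(v,\lambda(t))$ itself, not for $b(v,\Lambda(t))$. This is exactly what the paper does: it introduces the once-integrated residual
\[
\langle G(t),v\rangle_\X := \langle R(u_0-u(t)),v\rangle_\Y - \int_0^t\langle Au(s),v\rangle_\X\,ds + \int_0^t\langle f(s),v\rangle_\X\,ds,
\]
shows $G(t)\in V^\circ$ for a.e.\ $t$ (by testing the reduced equation against $\phi(t)=\int_t^T\xi(s)\,ds$ with $\xi\in\mathrm C_0^\infty(0,T;V)$ and using density), and then \emph{defines} $\lambda$ directly from H1 via $b(v,\lambda(t))=\langle G(t),v\rangle_\X$. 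Because $G\in\H^1(0,T;\X')$, the inf--sup condition immediately gives $\lambda\in\H^1(0,T;\M)$, and $G(0)=0$ forces $\lambda(0)=0$; no bootstrapping is needed. Your route through $\Lambda$ would only recover $\Lambda\in\H^1$, i.e.\ $\lambda\in\L^2$, which you already had, and the ``bootstrapping to $\lambda\in\H^1$'' you allude to is left unexplained. Dropping $\Lambda$ and working with the once-integrated identity directly closes the gap and brings your argument in line with the paper's.
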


\begin{proof} 
\textit{Existence.} We show that a solution $u$ for the Problem~\ref{PC1}  is given by
\begin{align}\label{solucion summa de u tilde y z}
u(t):=\tilde {u}(t)+z(t) \quad \text{a.e} \quad t\in [0,T],
\end{align}
where $\tilde{u}\in \L^2(0,T;V)$ and $z\in \L^2(0,T;V^{\bot})$.
In fact, let $B:X \to M'$ be the operator induced by the bilinear form $b$, \textit{i.e.}, 
\[
\langle Bv,\mu \rangle_{\M}:=b(v,\mu)\quad \forall v\in\X,\ \forall \mu\in\M.
\]
Since $b$ satisfies the \textit{inf--sup} condition, for each $t\in[0,T]$ there exists a unique $z(t)\in V^\perp$ such that 
\[
\langle Bz(t),\mu\rangle_{\M}=\langle g(t),\mu\rangle_{\M}\quad\forall \mu\in\M,
\]
and there exists a constant $C>0$ such that $\|z\|_{\L^2(0,T;\X)}\leq C \|g\|_{\L^2(0,T;\M')}$. 
Moreover, by recalling $g\in\H^1(0,T;\M')$ (see H6), it follows that $z\in\H^1(0,T;X)$, which implies
\[
\|z\|_{\H^1(0,T;\X)}\leq C \|g\|_{\H^1(0,T;\M')}.
\]

Next, we have to consider the solution of the following degenerate parabolic problem, 
\begin{problem}\label{parabolicoutilde}
 Find $\tilde{u}\in\mathrm{L}^2(0,T;V)$ such that:
\begin{align*}
&\frac{d}{dt}\langle R\tilde{u}(t),v\rangle_{\Y}
+\langle A\tilde{u}(t),v\rangle_{\X}
=\langle f(t),v\rangle_{\X}
-\frac{d}{dt}\langle Rz(t),v\rangle_{\Y}
-\langle Az(t),v\rangle_{\X}
&&\forall v\in V \quad\text{in }\mathcal{D}'(0,T)\\
&\langle R\tilde{u}(0),v\rangle_{\Y}
=\langle R(u_0-z(0)),v\rangle_{\Y} 
&&\forall v\in\Y.
\end{align*}
\end{problem}
We can notice that $z(0)$ can be computed because $z\in \H^1(0,T;X)$.
It is straightforward to verify that the Problem \ref{parabolicoutilde} has a unique solution, 
which satisfies the following inequality (see, for instance, \cite[Chapter~3, Propositions 3.2 and 3.3]{showalter})
\[
\|\tilde u\|_{\L^2(0,T;V)} \leq C \left\{\left\|f-Az-\frac{d}{dt}Rz\right\|^2_{\L^2(0,T;V')} + \langle R(u_0-z(0)),u_0-z(0)\rangle_{Y}\right\}^{1/2},
\]
hence,
\[
\|\tilde{u}\|_{\L^2(0,T;\X)}\leq C
\left\{ 
\|f\|_{\L^2(0,T;\X')} + \|g\|_{\H^1(0,T;\M')} + \|u_0\|_{\Y}
\right\}.
\]
Now, we define $u$ as in \eqref{solucion summa de u tilde y z}. Then $u$ satisfies the second and third equation the Problem~\ref{PC1} and the following inequality holds
\[
\|u\|_{\L^2(0,T;\X)} \leq C 
\left\{
\|f\|_{\L^2(0,T;V')} + \|g\|_{\H^1(0,T;\M')} +\|u_0\|_{Y}
\right\}.
\]

Next, in order to show the existence of the Lagrange multiplier $\lambda$, we consider the operator $G\in L^{2}(0,T;\X')$  defined by
\begin{equation}\label{defG}
\int_0^T \langle G(t),v(t) \rangle_{\X} \dt
:= \int_0^T \left[
\langle R(u_0-u(t)),v(t) \rangle_{\Y}
-\int_0^t \langle Au(s),v(t)\rangle_{\X} \ds 
+\int_0^t  \langle f(s),v(t)\rangle_{\X} \ds
\right]\dt
\end{equation}
for all $v\in \L^2(0,T;\X)$. Note that $G\in\H^1(0,T;\X')$, in fact, $\dfrac{d}{dt} G:[0,T]\to\X'$ is given by
\[
\left\langle \frac{d}{dt} G(t),v\right\rangle_{\X}:= \frac{d}{dt}\langle R(u_0-u(t)),v\rangle_{\Y}-\langle Au(t),v\rangle_{\X}-\langle f(t),v\rangle_{\X}\qquad \forall v\in \X
\]
and therefore $G\in\mathcal{C}([0,T];\X)$. We need to prove that $G\in \L^2(0,T;V)^{\circ}$, \textit{i.e.},
\begin{equation}\label{anhilador de G2}
\int_0^T \langle G(t),v(t)\rangle_{\X} \dt=0 \quad \forall v\in \L^2(0,T;V).
\end{equation}
In fact, it is easily seen  that the Problem \ref{parabolicoutilde} is equivalent to finding  $\tilde{u}\in \L^2(0,T;\X)$
such that for all $v\in \L^2(0,T;V)\cap\H^1(0,T;W)$ with $v(T)=0$, the following identity holds:
\begin{align*}
&-\int_0^T \langle R\tilde u(t),v'(t)\rangle_{\Y}\dt+\int_{0}^{T}\langle A\tilde u(t),v(t)\rangle_{\X}\,dt\\
&\qquad=\int_{0}^{T}\langle f(t),v(t)\rangle_{\X}\dt
-\int_{0}^{T}\langle Az(t),v(t)\rangle_{\X}\dt
+\int_{0}^{T}\langle Rz(t),v'(t)\rangle_{\Y}\dt
+\langle R(\tilde u(0)+z(0)),v(0)\rangle_{\Y}.
\end{align*}
By testing with $v:=\phi\in \mathrm{C}^{\infty}(0,T;V)$ given by 
$\phi(t):= \int_{t}^{T}\xi(s)\ds$ where $\xi\in \mathrm{C}_0^{\infty}(0,T;V)$, and using a variable change, we get
\begin{align*}
&\int_0^T \langle R \tilde u(t), \xi(t)\rangle_{\Y} \dt
+\int_0^T\int_0^t{\langle A\tilde u(s),\xi(t)\rangle_{\X}}\ds\dt  \\
&\ \ =\int_0^T\int_0^t \langle f(s),\xi(t)\rangle_{\X}\ds\dt 
-\int_0^T\int_0^t \langle Az(s),\xi(t)\rangle_{\X}\ds\dt
+\int_0^T\langle R(\tilde u(0)+z(0))-Rz(t),\xi(t)\rangle_{Y}\dt 
\end{align*}
or equivalently,
\[
\int_0^T \langle G(t),\xi(t)\rangle_{\X}\dt=0 \quad \forall\xi\in \mathrm{C}_0^{\infty}(0,T;V).
\]
Therefore, by using the density of $\mathrm{C}_0^{\infty}(0,T;V)$ in $\L^2(0,T;V)$, \eqref{anhilador de G2} follows. 
Thus, the \textit{inf--sup} condition implies there exists a unique
$\lambda\in \L^2(0,T;\M)$ such that
\begin{align}\label{igualdad de lambda2}
\int_0^T b(v(t),\lambda(t))\dt
=\int_0^T \langle G(t),v(t)\rangle_{\X}\dt \quad \forall v\in \L^2(0,T;\X),
\end{align}
and satisfying
\[
\|\lambda\|_{\L^2(0,T;\M)}\leq C 
\left\{
\|f\|_{\L^2(0,T;\X')}+\|g\|_{\H^1(0,T;\M')}+\|u_0\|_{Y}
\right\}.
\]
{It is easy prove $\lambda\in \H^1(0,T;\M)$ because $G\in\H^1(0,T;\X')$ and the \textit{inf--sup} condition. Now, we will prove} that $u$ and $\lambda$ verify \eqref{PC1}. In fact, let $v\in X$ and 
$\phi\in \mathrm{C}_0^{\infty}(0,T)$. Let $\xi\in \L^2(0,T;\X)$ given by
\[
t\mapsto \xi(t):=-v\phi'(t).
\]
By testing \eqref{igualdad de lambda2} with $\xi$ and using integration by parts, we obtain
\begin{equation*}
\int_0^T\langle G(t),\xi(t)\rangle_{\X}\dt
=\int_0^T b(\xi(t),\lambda(t))\dt
=\int_0^T \phi(t)\frac{d}{dt}b(v,\lambda(t))\dt
\end{equation*}
and by recalling the definition of $G$ (see \eqref{defG})
\begin{equation*}
\int_0^T\langle G(t),\xi(t)\rangle_{\X}\dt
=\int_0^T \phi(t)\left\{
-\frac{d}{dt}\langle Ru(t),v \rangle_{\Y}
-\langle Au(t),v\rangle_{\X} 
+\langle f(t),v\rangle_{\X}\right\}\dt.
\end{equation*}
Therefore,
\[
\int_0^T\phi(t) \left\{
\frac{d}{dt}[\langle Ru(t),v\rangle_{\Y}
+b(v,\lambda(t))] 
+\langle Au(t),v\rangle_{\X} 
-\langle f(t),v\rangle_{\X}\right\}\dt=0
\]
for all $\phi\in \mathrm{C}_0^{\infty}(0,T)$ and $v\in X$, and consequently $u$ and $\lambda$ satisfy \eqref{PC1}.

\textit{Uniqueness.}  Let $(u,\lambda)$ be a solution of the Problem~\ref{PC1} with $u_0=0$, $f=0$ and $g=0$. We need to prove that
 $(u,\lambda)=(0,0)$ by assuming that $\lambda\in\H^1(0,T;\M)$.
In fact, we can notice that from first identity the Problem~\ref{PC1} it follows $u\in\L^2(0,T;V)$. Consequently, by testing 
the Problem~\ref{PC1} with $v\in V$, we deduce that $u$ is a solution of the homogeneous degenerate parabolic problem in the kernel $V$:
\begin{align*}
&\frac{d}{dt}\langle R{u}(t),v\rangle_{\Y}
+\langle A{u}(t),v\rangle_{\X}
=0
&&\forall v\in V \quad\text{in }\mathcal{D}'(0,T),\\
&\langle R{u}(0),v\rangle_{\Y}=0 
&&\forall v\in\Y.
\end{align*}
This Problem has at most one solution by virtue of \cite[Chapter III, Proposition~3.3]{showalter} and therefore $u=0$. Then,  we have
{
\[b(v,\partial_t\lambda(t))=0 \quad \forall v\in X\quad \pct \ t\in(0,T),\]
}
thus, the \textit{inf--sup} condition of $b$ yields {$\partial_t\lambda=0$, then $\lambda$ is a time independent variable. 
Furthermore, we can deduce that $\lambda(0)=0$. In fact, by testing \eqref{igualdad de lambda2} with $v(t)=w \varphi(t)$, $w\in\X$, $\varphi\in\mathrm{C}_0^{\infty}(0,T)$, it follows that
\[
\langle G(\cdot),w\rangle_{\X}=b(w,\lambda(\cdot))\quad \forall w\in \X\quad \text{in }\mathcal{C}([0,T]).
\]
Now, by recalling that $\lambda\in \mathcal{C}([0,T];\M)$, the continuous \textit{inf--sup} condition gives
\begin{equation*}%\label{lambdaCero}
\beta\|\lambda(0)\|_{\M}
\leq \underset{\underset{w\neq 0}{w\in\X}}{\sup}\frac{b(w,\lambda(0))}{\|w\|_{\X}}
=\underset{\underset{w\neq 0}{w\in\X}}{\sup}\frac{\langle G(0),w\rangle_{\X}}{\|w\|_{\X}}=0,
\end{equation*}
which implies $\lambda(0)=0$, thus we deduce that $\lambda=0$.
}
\end{proof}
%--------------------------------------------------------------------
\section{Applications}\label{aplicaciones} %to Mechanic of fluids and eddy current model}\label{aplicaciones}
%--------------------------------------------------------------------
\subsection{Applications to fluid dynamics: The time-dependent Stokes problem.}
%--------------------------------------------------------------------
The \textit{time-dependent Stokes equation system} is a fundamental model of viscous flow because it represents the asymptotic limiting form of the \textit{Navier-Stokes problem} when the Reynolds number becomes very small
\cite[Chapter~14]{panton}. In this limit, the fluid dynamic is mainly controlled by diffusion and the non-linear convection term in the full Navier-Stokes equation can be dropped to obtain the so-called \textit{Stokes problem}. Stokes flows are important in lubrication theory, in porous media flow and in certain biological applications namely in the swimming of microorganisms, in microfluidics applications and in the flow of blood in parts of the human body. 

In order to put everything into a mathematical framework, let $\Omega$ be an open, bounded and connected  subset of $\R^d$ the domain occupied by the fluid, being $d$ either $2$ or $3$ the space dimension. The boundary of $\Omega$ is denoted by $\Gamma:=\partial\Omega$ and assumed to be Lipschitz continuous. Then, the strong time-dependent Stokes consists 
\begin{problem}
\label{problemastokes}
Find $\un:\Omega \times [0,T]\rightarrow \R^d$ and $p: \Omega\times [0,T]\rightarrow \R$ such that
\begin{align*}
\dfrac{\partial\un}{\partial t} 
-\nu \Delta\un + {\nabla}p 
= \boldsymbol{f}&\qquad \text{in}\ \Omega\times [0,T],
\\
\dive\un = 0 &\qquad \text{in}\ \Omega\times [0,T],
\\
\un= \boldsymbol{0}&\qquad \text{on}\ \Gamma\times[0,T], 
\\
\un(\cdot,0)=\un_0(\cdot) &\qquad \text{in}\ \Omega.
\end{align*}
\end{problem}

The variable $\un$ is a vector-valued function representing the velocity of the fluid, and the scalar function $p$ represents the pressure. First equation, means conservation of the momentum of the fluid (and so is the \textit{momentum equation}). The second equation enforces conservation of mass and in the specialized literature it is also referred as the \textit{incompressibility constraint}. In third equation, we are considering the classical homogeneous Dirichlet boundary condition, but other condition involving the normal derivative of $\un$ or a linear combination between the latter and $p$ on $\Gamma$ can also be considered in the model, see, for instance, \cite[Section~10.1.1]{quarteronivalli}. The last equation is the initial condition given by the known data function $\un_0$ which is the initial velocity. Other data of the problem are the positive constant $\nu$ and a given vector function $\fn$ which is the body force acting on the fluid.

We can observe that if $(\un,p)$ is a solution of the Problem \ref{problemastokes} then if a constant $\kappa$ is added to the pressure field solution $p$, the pair $(\un,p+\kappa)$ {is another solution.}
Consequently, the pressure can be restricted to have zero-mean over $\Omega$, \textit{i.e.}, $p(t)\in\lodoso\quad \forall t\in[0,T]$,  where 
\begin{equation*}
\lodoso:=\left\{
q\in\ldoso:\ \int_{\Omega}q=0
\right\},
\end{equation*}
endowed with the usual norm in $\ldoso$. 

More precisely, according to \cite{BR}, the pressure belongs to the space of distributions with variable in $[0,T]$ and value in $\lodoso$, \textit{i.e.}, $p\in \mathcal{D}([0,T];\lodoso)$. Furthermore, the variational formulation for the Stokes Problem proposed in \cite{BR} can read as follows:
\begin{problem} %\label{variationalstokes}
Find $\un\in\ldoshounod$ and $p\in\mathcal{D}([0,T];\lodoso)$
such that
\begin{align*}
&\frac{d}{dt}\left( \int_{\Omega} \un(t)\cdot \vn \right)+ \nu \int_{\Omega }\nabla\un(t) : \nabla\vn
-\int_\Omega p(t)\dive \vn=\int_{\Omega}\boldsymbol{f}(t)\cdot\vn&&\forall\vn\in\hounoO^d,\\
&\int_\Omega q\dive\un = 0&&\forall q\in\lodoso,\\
&\un(\cdot,0)=\un_0(\cdot) \quad \text{in}\ \Omega,
\end{align*}
\end{problem}
Here, the tensor product $\zn:\wn$ is given by $\zn:\wn:=\sum_{i=1}^d\sum_{j=1}^d \zn_{ij}\wn_{ij}$ for all $\zn,\wn\in \L^2(\Omega)^{d\times d} $. Next, in order to obtain an equivalent formulation to the previous problem but with the structure of the family of problems studied in Section~\ref{continuo}, 
we introduce the time-primitive of the pressure:
\begin{equation*}
P(\xn,t):=\int_{0}^tp(\xn,s)ds,\qquad \xn\in\Omega,\quad t\in[0,T].
\end{equation*}
Consequently, we obtain the following variational formulation for the Stokes problem: 
\begin{problem}\label{stokesvariational2}
Find $\un\in\ldoshounod$ and $P\in\ldoslodoso$ such that
\begin{align*}
&\frac{d}{dt}\left(\int_{\Omega}\un(t)\cdot \vn -\int_\Omega P(t)\dive \vn\right)
+ \nu \int_{\Omega }\nabla\un(t) : \nabla\vn
=\int_{\Omega}\boldsymbol{f}(t)\cdot\vn&& \forall\vn\in\hounoO^d,
\\
&\int_\Omega q\dive\un = 0&&\forall q\in\lodoso,
\\
&\un(\cdot,0)=\un_0(\cdot) \quad \text{in}\ \Omega.
\end{align*}
\end{problem}
%--------------------------------------------------------------------
\subsubsection{Well-posedness of the continuous mixed variational Stokes problem}
Before starting our analysis for the well-posedness of Problem \ref{stokesvariational2}, we need to recall some properties of gradient and divergence operators.
We first recall that the gradient operator  
\[
\nabla:\ldoso\to\hmunoo^d,
\]
is defined by 
\[
\produ{\nabla q,\vn}_{\houno^d}:=\int_{\Omega} q\dive\vn\qquad\forall q\in \ldoso,\quad\forall \vn\in \houno^d,
\]
where $\produ{\cdot,\cdot}_{\houno^d}$ denotes the duality pairing between $\hmunoo^d$ and $\houno^d$.
Now, we denote by $V$ the kernel of the divergence operator, \textit{i.e.},
\begin{equation}\label{defV-S}
V:=\set{\vn\in\houno^d:\ \dive\vn=0}.
\end{equation}
Let $V^\bot$ and $V^\circ$ the orthogonal and the annihilator of $V$ respectively, \textit{i.e.},
\begin{align}
V^\bot&:=\set{\vn\in\houno^d:\ \inner{\vn,\wn}_{\houno^d}=0\quad\forall\wn\in V},\nonumber
\\
V^\circ&:=\set{\eta\in\hmunoo^d:\ \produ{\eta,\vn}_{\houno^d}=0\quad\forall\vn\in V}.\nonumber
\end{align}

The following result shows the importance by these previous subspaces in the analysis of the operators $\nabla$ and $\dive$. Moreover, it proves that the range space of $\dive$ is exactly $\lodoso$.
\begin{lemma}\label{isomorfismo-div-grad}
Let $\Omega$ be connected.
\begin{enumerate} 
\item The operator $\nabla$ is an isomorphism of $\lodoso$ onto $V^\circ$.
\item The operator $\dive$ is an isomorphism of $V^\bot$ onto $\lodoso$.
\end{enumerate}
\end{lemma}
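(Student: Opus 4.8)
The plan is to prove the two assertions of Lemma~\ref{isomorfismo-div-grad} together, exploiting the duality between $\nabla$ and $\dive$ and a single nontrivial input, namely the classical result that $\nabla q = 0$ in $\hmunoo^d$ (equivalently, $\int_\Omega q\,\dive\vn = 0$ for all $\vn\in\houno^d$) implies $q$ is constant when $\Omega$ is connected. This is the only place connectedness enters, and it is exactly the point I expect to be the crux; I would either cite it (it is the $\mathrm{L}^2$-version of the de~Rham / Ne\v{c}as inequality, see e.g.\ Ne\v{c}as or Girault--Raviart) or sketch its proof via the Ne\v{c}as inequality $\|q\|_{\ldoso}\le C(\|q\|_{\hmunoo}+\|\nabla q\|_{\hmunoo^d})$ on a Lipschitz domain together with a compactness/connectedness argument to remove the $\|q\|_{\hmunoo}$ term on $\lodoso$.

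First I would record that $\nabla$ maps $\lodoso$ into $V^\circ$: by definition $\produ{\nabla q,\vn}_{\houno^d}=\int_\Omega q\,\dive\vn$, which vanishes for $\vn\in V$ since $\dive\vn=0$. Injectivity of $\nabla$ on $\lodoso$ is precisely the connectedness fact above: if $\nabla q=0$ then $q$ is constant, and the zero-mean constraint forces $q=0$. For surjectivity onto $V^\circ$, the key is the closed-range argument. I would invoke the closed range theorem: $\nabla:\lodoso\to\hmunoo^d$ has closed range if and only if it is bounded below on (a complement of) its kernel, which on $\lodoso$ reduces again to the inequality $\|q\|_{\ldoso}\le C\|\nabla q\|_{\hmunoo^d}$ for $q\in\lodoso$ — the quantitative form of injectivity, and the genuine content of the lemma. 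Granting this, $\nabla$ has closed range, and its range is contained in $V^\circ$; conversely, since $V$ is closed in $\houno^d$, $V = {}^\circ(V^\circ)$, so $\overline{\mathrm{range}(\nabla)}^{\,w*} = (\ker\nabla^*)^\circ$-type duality identifies $\mathrm{range}(\nabla)$ with $V^\circ$ exactly — more concretely, $\nabla^* = -\dive:\houno^d\to\ldoso$ (up to the identification) has kernel $V$, so $\mathrm{range}(\nabla)=(\ker\nabla^*)^\circ = V^\circ$ because the range is closed. Together with injectivity and boundedness, the open mapping theorem gives that $\nabla$ is an isomorphism onto $V^\circ$.

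For the second statement I would dualize. The operator $\dive:\houno^d\to\ldoso$ restricted to $V^\bot$ is injective: if $\vn\in V^\bot$ and $\dive\vn=0$ then $\vn\in V\cap V^\bot=\{0\}$. Its range lies in $\lodoso$: for $\vn\in\houno^d$, $\int_\Omega\dive\vn = \int_\Gamma\vn\cdot\nn = 0$ since $\vn$ has zero trace (density of $\mathrm C_0^\infty(\Omega)^d$ and the divergence theorem). Surjectivity onto $\lodoso$: given $q\in\lodoso$, part~1 produces $\eta=\nabla q\in V^\circ\subseteq\hmunoo^d$; I want instead a preimage in $V^\bot$. The clean route is Babu\v{s}ka--Brezzi: the bilinear form $(q,\vn)\mapsto\int_\Omega q\,\dive\vn$ satisfies the inf--sup condition $\sup_{\vn\in\houno^d}\frac{\int_\Omega q\,\dive\vn}{\|\vn\|_{\houno^d}}\ge\beta\|q\|_{\ldoso}$ for $q\in\lodoso$ — which is just the boundedness-below of $\nabla$ already established, rewritten — and the standard consequence of the inf--sup condition is exactly that $\dive|_{V^\bot}:V^\bot\to\lodoso$ is an isomorphism with $\|\vn\|_{\houno^d}\le\beta^{-1}\|\dive\vn\|_{\ldoso}$. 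Boundedness of $\dive$ is immediate, so the open mapping theorem (or the explicit bound) finishes it. Thus the main obstacle is genuinely the single inequality $\|q\|_{\ldoso}\le C\|\nabla q\|_{\hmunoo^d}$ on $\lodoso$ for connected Lipschitz $\Omega$; everything else is functional-analytic bookkeeping with the $\nabla$/$\dive$ duality and the closed range / open mapping theorems.
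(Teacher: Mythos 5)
Your argument is correct. The paper itself offers no proof of this lemma beyond the citation of \cite[Corollary~I.2.4]{GR}, and what you have written is essentially a reconstruction of the standard argument behind that citation: the Ne\v{c}as-type inequality $\|q\|_{\ldoso}\le C\|\nabla q\|_{\hmunoo^d}$ on $\lodoso$ (where connectedness enters), the closed range theorem identifying $\mathrm{range}(\nabla)$ with $V^\circ$ via $\ker(\dive)=V$, and the inf--sup/orthogonal-complement argument for $\dive|_{V^\bot}$. You correctly isolate the one genuinely deep ingredient and treat the rest as functional-analytic bookkeeping, which is exactly how the cited reference proceeds; citing the Ne\v{c}as/de~Rham inequality rather than reproving it is entirely in keeping with the paper's own level of detail.
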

\begin{proof}
See, for instance, \cite[Corollary~I.2.4]{GR}.
\end{proof}

Next, we deduce an immediate consequence of this previous lemma, which is important to prove the well-posedness of the variational formulation of the Stokes problem.
\begin{corollary}\label{div-inverso}
There exists a constant $C>0$ such that
\begin{equation*}
\norm{\vn}_{\houno^d}\leq C\norm{\dive\vn}_{\ldoso}\qquad\forall\vn\in V^\bot.
\end{equation*}
\end{corollary}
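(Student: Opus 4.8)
The plan is to derive the Corollary directly from part 2 of Lemma~\ref{isomorfismo-div-grad} by invoking the boundedness of the inverse of a bounded linear bijection between Banach spaces. First I would observe that $V^\bot$, being a closed subspace of the Hilbert space $\houno^d$, is itself a Banach space under the $\houno^d$-norm, and $\lodoso$ is a Banach space under the $\ldoso$-norm. The divergence operator $\dive:\houno^d\to\ldoso$ is linear and continuous, and its restriction $\dive|_{V^\bot}:V^\bot\to\lodoso$ is continuous as well; by Lemma~\ref{isomorfismo-div-grad}(2) this restriction is a bijection onto $\lodoso$.

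Next I would apply the bounded inverse theorem (open mapping theorem): a continuous linear bijection between Banach spaces has continuous inverse. Hence there exists a constant $C>0$ with
\begin{equation*}
\norm{(\dive|_{V^\bot})^{-1}q}_{\houno^d}\leq C\norm{q}_{\ldoso}\qquad\forall q\in\lodoso.
\end{equation*}
Then, given an arbitrary $\vn\in V^\bot$, I would set $q:=\dive\vn\in\lodoso$ and note that $\vn=(\dive|_{V^\bot})^{-1}q$, so that the displayed estimate becomes exactly $\norm{\vn}_{\houno^d}\leq C\norm{\dive\vn}_{\ldoso}$, which is the claim.

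There is really no serious obstacle here; the statement is a soft-analysis consequence of the lemma. The only point that deserves a word of care is the completeness of $V^\bot$ (so that the open mapping theorem applies): this holds because $V$ is defined by the vanishing of the continuous functional-valued map $\vn\mapsto\dive\vn$, hence $V$ is closed, hence its orthogonal complement $V^\bot$ is a closed — therefore complete — subspace of $\houno^d$. Alternatively, one may avoid the open mapping theorem altogether by quoting the quantitative form of Lemma~\ref{isomorfismo-div-grad} as it appears in \cite[Corollary~I.2.4]{GR}, where the isomorphism is already stated to be bicontinuous; the constant $C$ is then simply the norm of the inverse isomorphism.
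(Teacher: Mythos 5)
Your proposal is correct and follows essentially the same route as the paper: both note that $V^\bot$ is a closed (hence Banach) subspace of $\houno^d$, invoke part 2 of Lemma~\ref{isomorfismo-div-grad} to get a continuous linear bijection $\dive:V^\bot\to\lodoso$, and conclude by the bounded inverse (open mapping) theorem. The extra remarks you add (why $V^\bot$ is closed, or citing the bicontinuity directly from \cite[Corollary~I.2.4]{GR}) are harmless refinements of the same argument.
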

\begin{proof}
The linear operator $\dive:\houno^d\to\lodoso$ is clearly continuous. Furthermore, since $V^\bot$ is a closed subspace of $\houno^d$, $V^\bot$ is a Banach space and therefore, from the previous lemma, $\dive$ is a linear, continuous and bijective operator between Banach spaces. Hence, its inverse operator is also continuous and the result follows. 
\end{proof}

Now, we wish to analyze the Problem \ref{stokesvariational2}  by using the abstract theory studied in Section~\ref{continuo}. To this aim, we start by denoting
\begin{equation*}
X:=\hounoO^d,\qquad Y:=\ldoso^d,\qquad M:=\lodoso,
\end{equation*}
with their usual inner products. Actually, by using the basic properties of Sobolev spaces, it is a simple matter to deduce that these spaces satisfying the required properties for the theory. Moreover, we need to define the bilinear forms
{ $a:X\times X\to\R$ and $b:X\times M\to\R$ given by:
\begin{align*}
a(\vn,\wn)&:=\nu\int_{\Omega }\nabla\vn : \nabla\wn\qquad\forall\vn,\wn\in X,\\
b(\vn,q)&:= -\int_\Omega q\dive \vn\qquad\forall\vn\in X,\quad\forall q\in M.
\end{align*}}
Let us notice that the space $V$ defined in \eqref{defV-S} is precisely the kernel of the bilinear form $b$, \textit{i.e.},
\[
V=\left\{\vn\in \X: \ b(\vn,q)=0\quad \forall q\in \M\right\}
=\set{\vn\in\houno^d:\ \int_{\Omega}q\dive\vn = 0\quad\forall q\in\lodoso}.
\]
We can now state and prove the well-posedness for the weak formulation of the time-dependent Stokes problem. 
\begin{theorem}
Let $\un_0\in\ldoso$ and $\fn\in\ldosldoso$. The Problem \ref{stokesvariational2} has a unique solution $(\un,P)\in \ldoshounod\times\hunolodoso$ satisfying
\begin{equation*}
\norm{\un}_{\ldoshounod} + \norm{P}_{\ldoslodoso}\leq C\set{\norm{\un_0}_{\ldoso} + \norm{\fn}_{\ldosldoso}},
\end{equation*}
for a constant $C>0$.
\end{theorem}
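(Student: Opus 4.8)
The plan is to realize Problem~\ref{stokesvariational2} as a particular instance of the abstract Problem~\ref{PC1} and to verify hypotheses $\mathrm{H}1$--$\mathrm{H}6$ of Theorem~\ref{main}; the assertion then follows at once, with the time-primitive of the pressure $P$ playing the role of the multiplier $\lambda$. With $X:=\hounoO^d$, $Y:=\ldoso^d$, $M:=\lodoso$ and the forms $a,b$ introduced above (whose required structural properties are the standard facts about Sobolev spaces), I would take $R:Y\to Y'$ to be the Riesz isomorphism, so that $\produ{R\vn,\wn}_Y=\int_\Omega\vn\cdot\wn$ --- exactly the operator sitting inside the time derivative in Problem~\ref{stokesvariational2}. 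The data are $f:=\fn$, which lies in $\L^2(0,T;X')$ by the continuous embedding $\ldoso^d\hookrightarrow\hmunoo^d$ and $\fn\in\ldosldoso$; $g\equiv 0$, since the incompressibility constraint reads $b(\un(t),q)=0$; and $u_0:=\un_0$. Note that because $R$ is an isomorphism of $Y$ onto $Y'$, this instance of Problem~\ref{PC1} is actually non-degenerate, so the example also confirms that the abstract theory subsumes the classical (non-degenerate) mixed parabolic setting.

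Next I would check the six hypotheses. $\mathrm{H}1$, the continuous inf--sup condition for $b$, is essentially already available: given $q\in M$, Lemma~\ref{isomorfismo-div-grad} yields $\vn\in V^\bot$ with $\dive\vn=-q$, and Corollary~\ref{div-inverso} gives $\norm{\vn}_X\le C\norm{q}_{\ldoso}$; then $b(\vn,q)=\int_\Omega q^2=\norm{q}_M^2$, so $\sup_{\wn\in X}b(\wn,q)/\norm{\wn}_X\ge \beta\norm{q}_M$ with $\beta:=1/C$. $\mathrm{H}2$ and $\mathrm{H}3$ are immediate: $\produ{R\vn,\wn}_Y=\int_\Omega\vn\cdot\wn$ is symmetric with $\produ{R\vn,\vn}_Y=\norm{\vn}_Y^2\ge 0$, and $a(\vn,\wn)=\nu\int_\Omega\nabla\vn:\nabla\wn$ is symmetric. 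For $\mathrm{H}4$, the Poincar\'e--Friedrichs inequality on $\hounoO^d$ gives $\norm{\vn}_X^2\le (1+C_\Omega)\norm{\nabla\vn}_{\ldoso}^2$, hence $\produ{A\vn,\vn}_X=\nu\norm{\nabla\vn}_{\ldoso}^2\ge \alpha\norm{\vn}_X^2$ with $\alpha:=\nu/(1+C_\Omega)$; since $\produ{R\vn,\vn}_Y\ge 0$, adding $\gamma\produ{R\vn,\vn}_Y$ with any $\gamma>0$ preserves the bound, and it holds on all of $X$, in particular on $V$. $\mathrm{H}6$ is trivial because $g\equiv 0$. Finally, $\mathrm{H}5$ requires $\un_0\in W=\overline{V}^{\norm{\cdot}_Y}$; recalling the classical identification $W=\set{\vn\in\ldoso^d:\ \dive\vn=0\ \text{in}\ \Omega,\ \vn\cdot\bn=0\ \text{on}\ \Gamma}$, this is the natural sense in which the hypothesis on $\un_0$ is to be read.

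With $\mathrm{H}1$--$\mathrm{H}6$ in hand, Theorem~\ref{main} produces a unique pair $(\un,P)$ with $\un\in\ldoshounod$, $P\in\H^1(0,T;M)=\hunolodoso$ and $\lambda(0)=P(0)=0$ (in accordance with $P$ being a time-primitive of the pressure), together with the estimate
\[
\norm{\un}_{\ldoshounod}+\norm{P}_{\ldoslodoso}\le C\bigl(\norm{\fn}_{\L^2(0,T;X')}+\norm{g}_{\H^1(0,T;M')}+\norm{\un_0}_Y\bigr)=C\bigl(\norm{\fn}_{\ldosldoso}+\norm{\un_0}_{\ldoso}\bigr),
\]
which is precisely the claimed bound. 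I do not expect a genuine obstacle here: the one nontrivial ingredient, the inf--sup condition for $b$, has already been settled in Corollary~\ref{div-inverso} via Lemma~\ref{isomorfismo-div-grad}, and the remaining checks are routine. The point that deserves the most care is the interpretation of $\mathrm{H}5$ --- the identification of $W$ with the space of solenoidal $\L^2$ fields with vanishing normal trace, and the requirement that $\un_0$ lie in it --- and, secondarily, the verification that Problem~\ref{stokesvariational2} is \emph{exactly} the instance of Problem~\ref{PC1} given by the above data, in particular that the two initial conditions coincide, which uses that $R$ being an isomorphism makes $\produ{R\un(0),\vn}_Y=\produ{R\un_0,\vn}_Y$ for all $\vn\in Y$ equivalent to $\un(0)=\un_0$ in $Y$.
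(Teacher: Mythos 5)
Your proposal follows essentially the same route as the paper: cast Problem~\ref{stokesvariational2} as an instance of Problem~\ref{PC1} with $X=\hounoO^d$, $Y=\ldoso^d$, $M=\lodoso$, verify the hypotheses of Theorem~\ref{main}, proving the inf--sup condition H1 via Lemma~\ref{isomorfismo-div-grad} and Corollary~\ref{div-inverso} and the G\r{a}rding inequality H4 from the coercivity of $a$ (the paper takes $\gamma=0$, you take any $\gamma>0$ using monotonicity of $R$ --- an immaterial difference). Your treatment is correct and in fact slightly more explicit than the paper's, notably in spelling out the choice of $R$, the data $f$, $g$, $u_0$, and the reading of H5 as $\un_0\in W$ (the solenoidal $\L^2$ fields with vanishing normal trace), which the paper dismisses as immediate.
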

\begin{proof} It is sufficient to show that the conditions {H1}-{H5} in Theorem~\ref{main} hold true. We will only prove {H1} and {H4}, 
because the others conditions are immediate. 

In order to deduce {H1}, we need to prove that there exists $\beta>0$ such that
\begin{equation}\label{inf-sup-S}
\sup_{\vn\in\houno^d}\dfrac{-\int_\Omega {q\dive\vn}}{\norm{\vn}_{\houno^d}}\geq \beta\norm{q}_{\ldoso}\qquad\forall q\in\lodoso.
\end{equation}
In fact, let $q\in\lodoso$ with $q\neq 0$. Then, from Lemma~\ref{isomorfismo-div-grad}, there exists a unique $\wn\in V^\bot\subset\houno^d$
with $\wn\neq 0$ and
satisfying $\dive\wn=q$. Hence, by using Corollary~\ref{div-inverso}, it follows that
\[
\norm{\wn}_{\houno^d} \leq C\norm{q}_{\ldoso},
\]
for some positive constant $C$. Consequently,
\begin{equation*}
\sup_{\vn\in\houno^d}\dfrac{-\int_\Omega {q\dive\vn}}{\norm{\vn}_{\houno^d}}\geq
\dfrac{-\int_\Omega {q\dive(-\wn)}}{\norm{-\wn}_{\houno^d}} 
= \dfrac{\norm{q}_{\ldoso}^2}{\norm{\wn}_{\houno^d}}
\geq \dfrac{1}{C}\norm{q}_{\ldoso}\quad \forall q\in\lodoso,
\end{equation*}
{ which proves \eqref{inf-sup-S}}.

On the other hand, by noticing that
\[
a(\vn,\vn)=\nu\norm{\vn}_{\houno^d}^2\qquad\forall\vn\in\houno^d,
\]
we get \eqref{garding1} with $\gamma:=0$ and $\alpha:=\nu$, and therefore {H4} holds.
\end{proof}
%--------------------------------------------------------------------
\subsection{Application to the eddy current model}
In this section, we want to show other particular application for the theory, come{s} from mixed formulations {for} an electromagnetic problem so-called \textit{the eddy current model}.  
The eddy current model is obtained by dropping the displacement currents from Maxwell equations (see, \textit{e.g.} {\cite[chapter 10]{bossavit})}and it provides a reasonable approximation to the solution of the full Maxwell system in the low frequency range (see~\cite{AB}). Then{, the eddy current model} equations restricted to a domain $\Omega\subset \R^3$ {come to be:}
\begin{problem}
\label{problemaeddy}
Find $\bE:\Omega\times[0,T]\to\R$ and $\bH:\Omega\times[0,T]\to\R$ such that
\begin{align*}
\curl\bH=\bJ+\sigma\bE &\quad\text{in }\Omega\times[0,T],
\\
\frac{\partial}{\partial t}(\mu\bH)+\curl\bE=\cero &\quad\text{in }\Omega\times[0,T],
\\
\dive(\mu\bH)=0 &\quad\text{in }\Omega\times[0,T],
\\
\bH(\xn,0)=\bH_0 &\quad\text{in }\Omega.
\end{align*}
\end{problem}

The variable $\bE$ is the electric field, $\bH$ the magnetic field, $\bJ$ the current density, $\Hn_0$ is the initial magnetic field, $\mu$ the magnetic
permeability and $\sigma$ the electric conductivity. 
{Furthermore, the model consider the physical parameter $\varepsilon$, that is called  the electric permittivity.} 
The domain $\Omega$ has to be divided into two regions: the conductor $\oc$ and the insulator $\od=\Omega\setminus\overline{\oc}$. As usual, it was assumed that $\mu$, $\varepsilon$ and $\sigma$ are time-independent and piecewise smooth real valued functions satisfying:
\begin{align}
&\varepsilon_1\geq \varepsilon(\xn)\geq \varepsilon_0 > 0\quad\text{$\pct$ in $\oc$}
\ \ \quad\text{and}\quad\varepsilon(\xn)= \varepsilon_0 \quad \text{$\pct$ in $\od$},
\label{propepsilon}
\\
&\sigma_1\geq \sigma(\xn)\geq \sigma_0 > 0 \quad \text{$\pct$ in $\oc$} 
\quad\text{and}\quad\sigma(\xn)= 0 \quad \text{$\pct$ in $\od$},%\label{propsigma}
\\
&\mu_1\geq \mu(\xn)\geq \mu_0 > 0 \quad\text{$\pct$ in }\oc
\quad\text{and}\quad\mu(\xn)= \mu_0 \quad \text{ in $\od$}.\label{propmu}
\end{align}

The abstract theory developed in Section~\ref{continuo} can be used to study the continuous case of the mixed formulations proposed for the eddy 
current model in \cite{MS1,A1,A2,blrs:13}, but we will only focus in the formulations studied in \cite{A1} and \cite{blrs:13}. In both cases was deduced the 
formulation of the eddy current problem in terms of a time-primitive of the electric field, \textit{i.e.}, in terms of the unknown $\un$ given by
\begin{equation}\label{relauE}
\un(\xn,t):=\int_0^t \bE(\xn,s)ds.
\end{equation}
In \cite{A1}, {the eddy current problem} was studied for an internal conductor (\textit{i.e.}, 
the conductor satisfies $\partial\oc\cap \partial\Omega = \emptyset$), with the homogeneous Dirichlet boundary condition
\begin{align*}
\bE\times \nn=\cero \quad \text{on}\quad  [0,T]\times \partial\Omega.
\end{align*}
Moreover, they must impose the following conditions so that $\bE$ is uniquely determined
\begin{align*}
\dive(\varepsilon\bE)=0&\qquad\textrm{in }\od\times[0,T),
\\
\int_{\Sigma_i}\varepsilon\bE|_{\od}\cdot\bn \:=\:0&
\qquad \textrm{in }[0,T),\quad i=1,\dots,M_I,
\end{align*}
where $\Sigma_i$, $i=1,...,M_I,$ are the connected the components of $\Sigma:=\partial\oc$.

{On the other hand}, {i}n \cite{blrs:13} the eddy current model was analyzed with input currents intensities as source data, {they suppose two types of the conductor domain: internal conductors and inductors which go through the boundary of $\Omega$. In the second case, the boundary 
$\Gc:=\partial\oc\cap \partial\Omega$ %\ne \emptyset$
is not empty, and it is splitted as 
$\Gc=\GE\cup\GJ$, where $\GJ$ is the current input surface and $\GE$ is the current exit surface of the domain conductor. The connected components
of the boundary $\GJ$ are denoted by $\GJ^1,\GJ^2,\ldots, \GJ^N$ (see Figure~\ref{domain}, $N=3$.).
}

\begin{figure}[!ht]
	\begin{center}
		\includegraphics*[width=7cm]{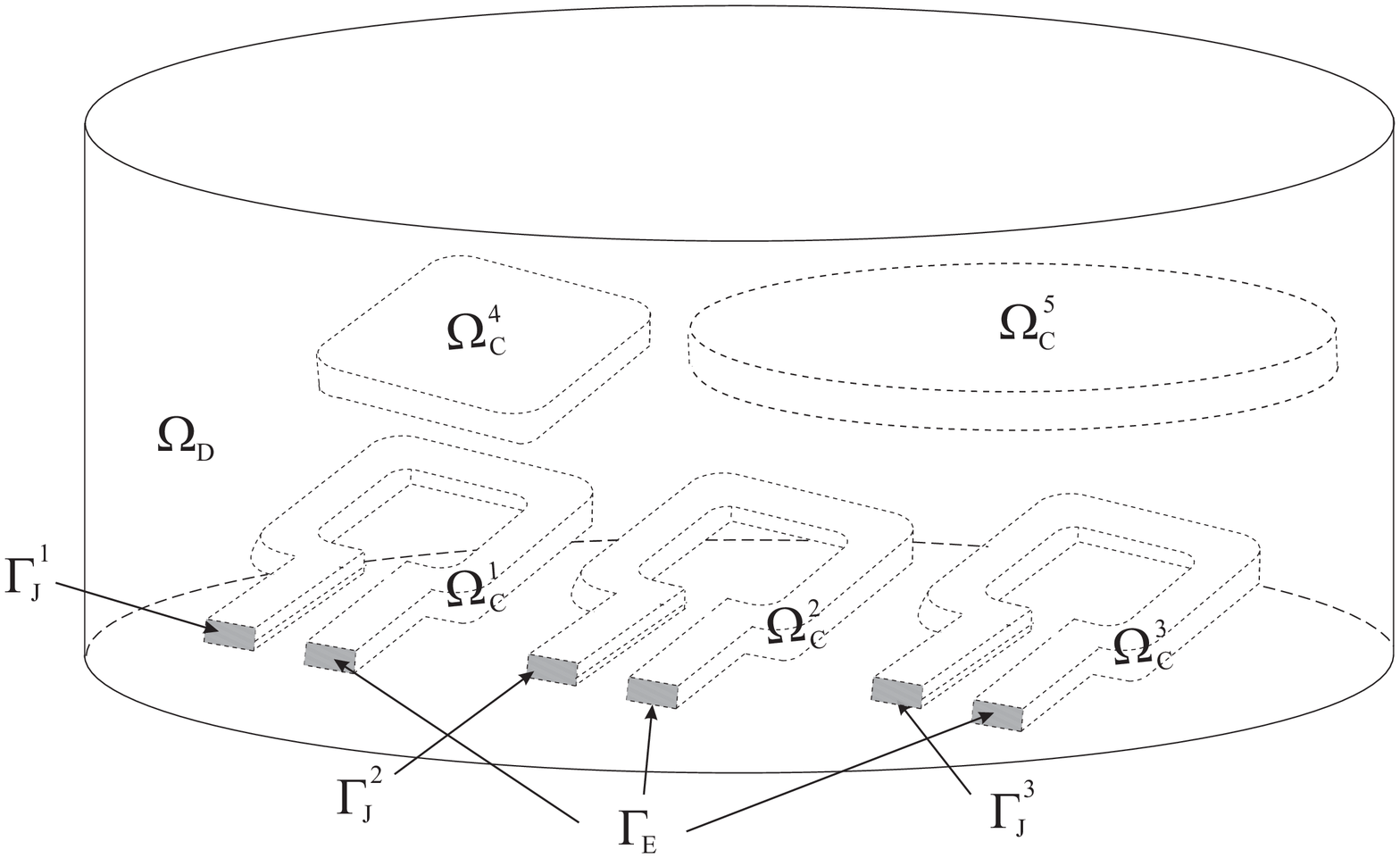}	
	\end{center}
	\caption{Sketch of the domain}
	\label{domain}
\end{figure}

The intensities of the input current are imposed by
\[
\int_{\GJ^n}\sigma\bE\cdot\bn=I_n \quad\text{in }[0,T],\quad n=1,\ldots,N,
\]
where $I_n$ is the current intensity through the surface $\GJ^n$, and the following boundary conditions was proposed
\begin{align*}
\bE\times\nn=\cero &\quad\text{on }[0,T]\times\GE,
\\
\bE\times\nn=\cero &\quad\text{on }[0,T]\times\GJ,
\\
\mu\bH\cdot\nn=0 &\quad\text{on }[0,T]\times\partial\Omega.
\end{align*}
In this case, $\bE$ is uniquely determined provided that
\begin{align*}
\dive(\varepsilon \bE)=0&\quad \text{in } [0,T]\times\od,
\\
\epsilon \bE|_{\od}\cdot\bn =g &\quad \text{on }[0,T]\times\Gd,
\\
\int_{\GI^k}\varepsilon\bE|_{\od}\cdot\bn=0, &\quad
k=2,\dots,M_{I}, \quad \mbox{in }[0,T],
\end{align*}
{where $\Gd:=\partial\Omega\cap\partial\od$; $\GI^k:=\partial\oc^k\cap\partial\od$, $k=1,\dots,M_{I},$ are the connected components of 
the interface boundary between the conducting and the insulator domain (see Figure~\ref{domain}, $M_{I}=5$) and $g$ is an additional data.}

In order to show the application of the  abstract theory in the { models studied in \cite{A1} and \cite{blrs:13},} we need to introduce
{ the following functional spaces:}
\begin{align*}
\hcurlo&:=\left\{\vn\in\ldoso^3:\: \curl\vn\in\ldoso^3\right\},\\
\hocurlo&:=\left\{\vn\in\hcurlo:\: \vn\times\nn=\cero \:\: \text{on}\: \partial
\Omega\right\},\\
\mathbf{H}(\curl^0;\Omega)&:=\left\{\vn\in\hcurlo:\: \curl\vn=\cero \:\: \text{in}\:
\Omega\right\}.
\end{align*}
Finally, given a subset $\Lambda\subset \partial\Omega$, we denote
%$\mathbf{H}_{0,\Lambda}(\curl;\Omega)$
%(respectively $\hodive$) denotes the space of functions 
%$\wn\in \hcurlo$ (respectively $\wn\in \hdive$) with vanishing tangential (normal) trace on $\partial{\Omega}$.
%By $\mathbf{H}(\curl^0;\Omega)$ (respectively $\mathbf{H}(\dive^0;\Omega)$) we denote the functions $\wn\in\hcurlo$ with
%vanish rotational (divergence) in $\Omega$. Given subset $\Lambda\subset \partial\Omega$ we denote by 
\begin{equation*}
 \mathbf{H}_{\Lambda}(\curl;\Omega):=\left\{\vn\in\hcurlo:\: \vn\times \nn=\cero \quad \text{on}\quad
\Lambda\right\}
\end{equation*}
and 
\begin{equation*}
 \mathbf{H}_{\Lambda}(\curl^0;\Omega):=\mathbf{H}(\curl^0;\Omega)\cap \mathbf{H}_{\Lambda}(\curl;\Omega).
\end{equation*}
Similarly, we consider analogous definitions for the spaces $\hdive$, $\hodive$, $\mathbf{H}_{\Lambda}(\dive;\Omega)$. Furthermore,
we introduce the space  
{
\[
\mathbf{H}_{\Lambda}(\dive^0_{\varepsilon};\Omega):=\left\{\wn\in\mathbf{H}_{\Lambda}(\dive;\Omega): \dive({\varepsilon\wn})=0 \right\}.
\]
}
{In the following, we refer to the problem studied in \cite{A1} as \textit{internal conductor model} and the problem studied 
in \cite{blrs:13} as \textit{the input current model}. 
}
%--------------------------------------------------------------------
\subsubsection{Internal conductor}
The eddy current model (see Problem~\ref{problemaeddy}) for the case of internal conductors, can be written in terms of the new variable $\un$ given by \eqref{relauE}, as the following system of equations (see \cite{A1}): 
\begin{problem}\label{fuerte_e-1}
Find $\un:\Omega\times [0,T)\rightarrow\Omega$ such that:
\begin{align*}
\sigma\partial_t\un + \curl\left(\frac{1}{\mu}\curl\un\right)=\curl\Hn_0 - \Jn
&\qquad\textrm{in }\Omega\times(0,T),
\\
\dive(\varepsilon\un)=0&\qquad\textrm{in }\od\times[0,T),
\\
\int_{\Sigma_i}\varepsilon\un\cdot\bn \:=\:0&
\qquad \textrm{in }[0,T),\quad i=1,\dots,M_{I},
\\
\un\times\bn=\cero&\qquad\textrm{in }\partial\Omega\times[0,T),
\\
\un(\cdot,0)=\cero&\qquad \text{in $\Omega$}.
\end{align*}
\end{problem}
We assume that $\Hn_0\in\hcurlo$ and $\Jn\in\L^2(0,T;\ldoso^3)$. Furthermore, we need to introduce the space 
\begin{equation*}
\mo:=\left\{\mu\in\hunod:\  \mu\vert_{\partial\Omega}=0\textrm{ y }
\mu\vert_{\Sigma_i}=C_i,\ i=1,\dots,M_{I}\right\},
\end{equation*}
where $C_i$, $i= 1,\dots,M_{I},$ are arbitrary constants. Now, if we denote for $t\in[0,T]$
\begin{equation}\label{f-eddy}
\left\langle \fn(t),\vn\right\rangle=\int_{\Omega}\fn(t)\cdot\vn := \int_{\Omega}\curl\Hn_0\cdot\vn - \int_{\Omega} \Jn(t)\cdot\vn\qquad \forall\vn\in\hocurlo,
\end{equation}
the weak formulation of the Problem \ref{fuerte_e-1} (see \cite{A1}) is
\begin{problem}\label{mix1-int}
 Find $\un\in\ldoshocurlo$ and $\lambda\in\ldosmo$ such that 
\begin{align*}
%\label{mix1-int}
&\dfrac{d}{dt}\left[\int_{\oc}\sigma\un(t)\cdot\vn + \int_{\od}\varepsilon\vn\cdot\nabla\lambda(t)\right]
+ \int_{\Omega}\frac{1}{\mu}\curl\un(t)\cdot\curl\vn =\int_{\Omega}\fn(t)\cdot\vn&&\forall \vn\in\hocurlo, \\
%\label{mix2-int}
& \int_{\od}\varepsilon\un(t)\cdot\nabla\mu = 0 &&\forall \mu\in\mo ,\\
%\label{mix3-int}
&\un(\cdot,0)=\cero\quad \textrm{in }\oc.
\end{align*}
\end{problem} 

In order to fit the Problem \ref{mix1-int} in the abstract theory studied in Section~\ref{continuo}, we have to define
\begin{equation}\label{defXYM}
X:=\hocurlo,\qquad Y:=\ldoso^3,\qquad M:=\mo,
\end{equation}
with their usual inner products. Then, we can easily deduce that these spaces satisfying the corresponding properties for the theory. Moreover, we need to define the operators  $A:X\to X'$, $R:Y\to Y'$ and the bilinear form $b:X\times M\to\R$ given by:
\begin{align}
\left\langle A\vn,\wn\right\rangle_{\X}&:=\int_\Omega\dfrac1\mu\curl\vn\cdot\curl\wn&&\forall\vn,\wn\in X,\label{defA}\\
\left\langle R\vn,\wn\right\rangle_{\Y}&:=\int_{\oc}\sigma\vn\cdot\wn&&\forall\vn,\wn\in Y,%\label{defR}
\\
b(\vn,\mu)&:= \int_{\od}\varepsilon\vn\cdot\nabla\mu&&\forall\vn\in X,\quad\forall\mu\in M.\label{defb}
\end{align}

We can now proceed to show that the Problem \ref{mix1-int} is a well-posed problem, but we first need to prove the following
result about the continuous kernel of b, which is given by
\begin{equation}\label{defV}
\begin{split}
V:=&\left\{\vn\in\hocurlo:\:\: b(\vn,\mu)=0\quad \forall\mu\in\mo\right\}\\
=&\left\{\vn\in\hocurlo:\:\: \dive(\varepsilon\vn)=0\textrm{ in }\od;\quad
\int_{\Sigma_i}\varepsilon\vn\cdot\bn \:=\:0,\: i=1,\dots,M_I\right\}.
\end{split}
\end{equation}

\begin{lemma}\label{lemmaz}
Let $\hat\bn$ be the unit outward normal on $\partial\Omega$. For any $\vn\in V$, there exists a unique $\zn\in\hcurlod$ satisfying
\begin{equation}\label{ztraza}
\zn\times\bn=\vn\times\bn\textrm{ on }\partial\oc,
\quad\zn\times\hat\bn=\cero\textrm{ on }\partial\Omega,
\end{equation}
and
\begin{equation}\label{zkernel}
b(\zn,\mu)=0\qquad\forall \mu\in\mo.
\end{equation}
Furthermore, if we denote $\vc:=\vn\vert_{\oc}$, then there exists a constant $C>0$ such that
\begin{equation*}
\begin{split}
\|\zn\|_{\hcurlod}
\leq C \|\vc\|_{\hcurloc}.
\end{split}
\end{equation*}
\end{lemma}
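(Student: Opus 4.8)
The plan is to construct $\zn$ by solving an appropriate lifting problem in $\od$ and to use the characterization of $V$ in \eqref{defV} together with a trace argument. First I would note that for $\vn\in V$, the restriction $\vc=\vn\vert_{\oc}$ lies in $\hcurloc$, so its tangential trace $\vc\times\bn$ belongs to $\rangoGtoc = \H^{-1/2}(\dive_\tau;\partial\oc)$. Since $\partial\oc\cap\partial\Omega=\emptyset$ in the internal-conductor setting, $\partial\od = \partial\oc\cup\partial\Omega$ is a disjoint union, and one can pose the problem of finding $\zn\in\hcurlod$ with $\zn\times\bn = \vc\times\bn$ on $\partial\oc$ and $\zn\times\hat\bn=\cero$ on $\partial\Omega$. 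The existence of such a lifting follows from the surjectivity of the tangential trace operator onto the correct trace space (with an appropriate compatibility/zero condition on the outer boundary component), and this gives the continuity estimate $\|\zn\|_{\hcurlod}\leq C\|\vc\|_{\hcurloc}$ up to this point. To nail down \emph{uniqueness} and the divergence condition \eqref{zkernel}, I would refine the construction: among all such liftings, select the one that additionally satisfies $\dive(\varepsilon\zn)=0$ in $\od$ and $\int_{\Sigma_i}\varepsilon\zn\cdot\bn=0$ for $i=1,\dots,M_I$. This is exactly a constrained (mixed) elliptic problem in $\od$ — essentially finding $\zn$ in $\HGddivood$-type space with prescribed tangential trace — whose well-posedness is standard once one checks the relevant inf--sup condition on $\od$ using that $\varepsilon$ is bounded above and below.

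The key observation linking \eqref{zkernel} to the constraints is that $b(\zn,\mu)=\int_{\od}\varepsilon\zn\cdot\nabla\mu$, and integrating by parts, $\int_{\od}\varepsilon\zn\cdot\nabla\mu = -\int_{\od}\dive(\varepsilon\zn)\,\mu + \langle \varepsilon\zn\cdot\bn,\mu\rangle_{\partial\od}$. Because $\mu\in\mo$ vanishes on $\partial\Omega$ and is constant $C_i$ on each $\Sigma_i$, the boundary term collapses to $\sum_i C_i\int_{\Sigma_i}\varepsilon\zn\cdot\bn$. Hence imposing $\dive(\varepsilon\zn)=0$ in $\od$ together with $\int_{\Sigma_i}\varepsilon\zn\cdot\bn=0$ for all $i$ yields $b(\zn,\mu)=0$ for every $\mu\in\mo$, which is \eqref{zkernel}. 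The same computation shows that these two conditions are in fact equivalent to \eqref{zkernel}, so the constrained lifting problem is the natural one to solve.

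For \emph{uniqueness}: if $\zn_1,\zn_2$ are two solutions, then $\wn:=\zn_1-\zn_2\in\hcurlod$ has zero tangential trace on all of $\partial\od$, satisfies $\dive(\varepsilon\wn)=0$ in $\od$, and $\int_{\Sigma_i}\varepsilon\wn\cdot\bn=0$. On the insulator $\od$ — which may be topologically nontrivial, so here one uses that $\Sigma=\partial\oc$ has components $\Sigma_1,\dots,\Sigma_{M_I}$ exhausting the relevant cohomology — such a field with vanishing tangential trace and vanishing $\varepsilon$-weighted divergence and vanishing fluxes must be zero; this is a Poincaré–Friedrichs / Hodge-type uniqueness statement for $\hcurlod\cap\HGddivood$ with the flux conditions, and it is precisely where the topology of $\od$ enters. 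I expect this uniqueness step — verifying that the space of admissible $\wn$ with all homogeneous data is trivial on a possibly multiply connected insulator — to be the main obstacle, and I would handle it by invoking the standard Hodge decomposition results for $\mathbf{H}(\curl;\od)\cap\mathbf{H}(\dive_\varepsilon^0;\od)$ with boundary and flux constraints (e.g., from the Amrouche–Bernardi–Dauge–Girault framework), noting that the number $M_I$ of interface components is exactly matched to the number of flux constraints so that no nontrivial harmonic field survives. The continuity estimate then follows from the open mapping theorem applied to the (now bijective) solution operator, or directly from the a priori bound in the mixed formulation, giving $\|\zn\|_{\hcurlod}\leq C\|\vc\|_{\hcurloc}$.
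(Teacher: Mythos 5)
Your reduction of \eqref{zkernel} to the two conditions $\dive(\varepsilon\zn)=0$ in $\od$ and $\int_{\Sigma_i}\varepsilon\zn\cdot\bn=0$, $i=1,\dots,M_I$, is correct and agrees with the characterization \eqref{defV} used in the paper. The genuine gap is in your uniqueness step, and it propagates into your construction. A field $\wn\in\hcurlod$ with vanishing tangential trace on all of $\partial\od$, with $\dive(\varepsilon\wn)=0$ in $\od$ and vanishing fluxes through the $\Sigma_i$, is precisely an element of the space $\Vd$ appearing in the paper's proof, and $\Vd$ is far from trivial: since $\varepsilon\equiv\varepsilon_0$ in $\od$ by \eqref{propepsilon}, every field $\curl\boldsymbol{\psi}$ with $\boldsymbol{\psi}\in\mathrm{C}_0^{\infty}(\od)^3$ lies in $\Vd$ (its $\varepsilon$-divergence vanishes and its flux through each closed surface $\Sigma_i$ is zero by Stokes). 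The Hodge/harmonic-field uniqueness results you invoke require \emph{in addition} $\curl\wn=\cero$, which you do not have, so ``no nontrivial field survives'' is false here. Consequently your ``constrained lifting problem'' (prescribed tangential trace plus the divergence and flux constraints) does not determine $\zn$: its solution set is an infinite-dimensional affine set, there is no equation for the primal variable beyond the constraints, and hence neither the open-mapping argument nor an a priori bound applies. (For the same reason, uniqueness of $\zn$ subject \emph{only} to \eqref{ztraza}--\eqref{zkernel} cannot hold; it can only be uniqueness relative to an additional selection criterion.)

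What is missing is an elliptic equation that selects $\zn$ among the admissible liftings, and this is exactly how the paper proceeds: it first lifts the tangential trace (extended by $\cero$ on $\partial\Omega$) through the continuous right inverse of the tangential trace operator, $\vd:=\gt^{-1}(\vc\times\bn)$, and then corrects it by the mixed curl--curl problem \eqref{defw1}--\eqref{defw2}, i.e.\ it seeks $\wn\in\hocurlod$ and $\rho\in\mo$ with the curl--curl form on the left and the data $-\curl\vd$, $-b(\vd,\cdot)$ on the right; coercivity on the kernel $\Vd$ is supplied by the Poincar\'e-type inequality \eqref{poinc}, the \textit{inf--sup} condition follows by testing with $\nabla\mu$, and $\zn:=\wn+\vd$ then satisfies \eqref{ztraza}--\eqref{zkernel} together with $\|\zn\|_{\hcurlod}\leq C\|\vc\|_{\hcurloc}$. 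If you supplement your construction with such a variational principle (minimizing the curl energy subject to your constraints, which is equivalent to the paper's mixed problem), the existence part and the continuity bound of your argument go through; the Hodge-type uniqueness claim, as stated, does not.
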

\begin{proof}
We start by considering the following space
\begin{align*}
\Vd:=&\left\{\vn\in\hocurlod:\:\: \dive(\varepsilon\vn)=0\textrm{ in }\od,\quad
\int_{\Sigma_i}\varepsilon\vn\cdot\bn \:=\:0,\: i=1,\dots,M_I \right\}\\[1.5ex]
=&\left\{\vn\in\hocurlod:\:\: b(\vn,\mu)=0\quad \forall\mu\in\mo\right\}.
\end{align*}
In this space, the seminorm $\vn\mapsto\norm{\curl\vn}_{\ldoso^3}$ is a norm, which is equivalent to the $\hcurlod$-norm, \textit{i.e.}, there exists a constant $C>0$ satisfying (see, \textit{e.g.}~Corollary~4.4~\cite{hiptmair})
\begin{equation}\label{poinc}
\norm{\vn}_{\hcurlod}\leq C  \norm{\curl \vn}_{\ldosod^3} \qquad \forall \vn \in \Vd.
\end{equation}

{On the other hand, let $\gt$ be the tangential trace operator on $\hcurlod$ and $\gt^{-1}$  its continuous right inverse; see, \textit{e.g.}, 
\cite[Theorem~4.6]{buffa1-2} and \cite[Theorem~4.1]{buffa}. Therefore, for all $\vn\in V$,  we can define $\vd:=\gt^{-1}(\vc\times\bn)$, where $\vc:=\vn\vert_{\oc}$. Hence, $\vd\times\hat{\bn}=\cero$ on $\partial\Omega$, and furthermore there exists $C>0$ such that
\begin{equation}\label{cont-vd}
\norm{\vd}_{\hcurlod}\leq C\ \norm{\vc}_{\hcurloc}\leq C\norm{\vn}_{\hcurlo}.
\end{equation}
}		
We now consider the following mixed problem, where $\vd$ is defined above: find $\wn\in \hocurlod$ and $\rho\in\mo$ such that
\begin{align}\label{defw1}
&\int_{\od}\curl\wn\cdot\curl\hat\vn
+ b(\hat\vn,\rho)
= -\int_{\od}\curl\vd\cdot\curl\hat\vn
&&\forall\hat\vn\in\hocurlod,\\
\label{defw2}
& b(\wn,\mu)=-b(\vd,\mu)
&&\forall \mu\in\mo.
\end{align}
The clasical Babuska-Brezzi Theory shows this problem is well posed. In fact, the ellipticity of bilinear form $(\wn,\vn)\mapsto \left(\curl\wn,\curl\vn\right)_{\ldosod^3}$ on the kernel of $b$ is given by \eqref{poinc} and the corresponding \textit{inf--sup} condition can be proved as follows: Let $\mu\in\mo$, then $\nabla\mu\in\hocurlod$ and we have
\begin{equation*}
\sup_{\hat\vn\in\hocurlod}\dfrac{b(\hat\vn,\mu)}
{\Vert\hat\vn\Vert_{\hcurlod}}\geq \dfrac{b({\nabla\mu},\mu)}
{\Vert{\nabla\mu}\Vert_{\hcurlod}}=
\dfrac{\int_{\od}\varepsilon\nabla\mu\cdot\nabla\mu}{\Vert\nabla\mu\Vert_{\ldosod^3}}
=\varepsilon_0\Vert\nabla\mu\Vert_{\ldosod^3}.
\end{equation*}
The \textit{inf--sup} condition follows by recalling that in the space the $\mo$ the standard norm in $\hunod$ is equivalent to its seminorm.
			
Next, if we define $\zn:=\wn+\vd$, then $\zn$ satisfies \eqref{ztraza} and \eqref{zkernel}.	Finally, since $\wn$ is the solution of the well-posed problem \eqref{defw1}--\eqref{defw2}, from \eqref{cont-vd} it follows that
\[
\|\zn\|_{\hcurlod}\leq \|\wn\|_{\hcurlod} + \|\vd\|_{\hcurlod}
\leq C \|\vd\|_{\hcurlod} 
\leq C\|\vc\|_{\hcurloc}.
\]
Thus we conclude the proof.
\end{proof}
	
\begin{theorem}\label{well-eddy}
The Problem~\ref{mix1-int} has a unique solution $(\un,\lambda)\in\ldoshocurlo\times\H^1(0,T;\M(\od))$ and there exists a constant $C>0$ such that
\[
\|\un\|^2_{\L^2(0,T;\hcurlo)} + 
\|\lambda\|^2_{\L^2(0,T;\hunod)}
\leq C \left(\|\curl\Hn_0\|_{\ldoso^3}^2 
+\|\Jn\|^2_{\L^2(0,T;\ldoso^3)}\right).
\]
\end{theorem}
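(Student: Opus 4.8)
The plan is to verify the six hypotheses H1--H6 of Theorem~\ref{main} for the concrete choice of spaces in \eqref{defXYM} and operators in \eqref{defA}--\eqref{defb}, and then read off the well-posedness and the estimate directly from the abstract result. The data translation is $u_0=\cero$ (so H5 is trivial with $W=\overline V$), $g=0$ (so H6 is trivial), and $f$ given by \eqref{f-eddy}, which indeed lies in $\L^2(0,T;X')$ because $\Hn_0\in\hcurlo$ and $\Jn\in\L^2(0,T;\ldoso^3)$; the bound $\|f\|_{\L^2(0,T;X')}\le C(\|\curl\Hn_0\|_{\ldoso^3}+\|\Jn\|_{\L^2(0,T;\ldoso^3)})$ is immediate from Cauchy--Schwarz. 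Self-adjointness and monotonicity of $R$ (H2) are obvious since $\langle R\vn,\wn\rangle_Y=\int_{\oc}\sigma\vn\cdot\wn$ with $\sigma\ge 0$; self-adjointness of $A$ (H3) is equally clear from \eqref{defA}. So the content is in H1 and H4.

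For H1, I would prove the inf--sup condition for $b(\vn,\mu)=\int_{\od}\varepsilon\vn\cdot\nabla\mu$ over $\vn\in\hocurlo$, $\mu\in\mo$. The natural candidate test function is the extension by zero of $\nabla\mu$ from $\od$ to $\Omega$: given $\mu\in\mo$, the function $\mu$ vanishes on $\partial\Omega$ and is constant on each $\Sigma_i$, so $\nabla\mu|_{\od}$ (extended by $\cero$ in $\oc$) defines an element of $\hocurlo$ whose curl vanishes; one checks its tangential trace vanishes on $\partial\Omega$ because $\mu|_{\partial\Omega}=0$, and its tangential trace matches across $\Sigma$ in the appropriate weak sense because $\mu$ is constant on each component $\Sigma_i$. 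Then $b(\vn,\mu)\ge$ a constant times $\varepsilon_0\|\nabla\mu\|_{\ldosod^3}^2/\|\nabla\mu\|_{\ldosod^3}$, exactly as in the inf--sup argument already carried out inside the proof of Lemma~\ref{lemmaz}; combined with the equivalence of the $\hunod$-norm with the seminorm $\|\nabla\cdot\|_{\ldosod^3}$ on $\mo$ (Poincaré, using $\mu|_{\partial\Omega}=0$), this yields H1 with an explicit $\beta$.

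For H4 I need $\gamma,\alpha>0$ with $\int_\Omega\frac1\mu|\curl\vn|^2+\gamma\int_{\oc}\sigma|\vn|^2\ge\alpha\|\vn\|_{\hcurlo}^2$ for all $\vn\in V$, where $V$ is the kernel \eqref{defV}. This is the main obstacle: unlike the Stokes case where $a$ was already coercive, here $A$ has a huge kernel (all curl-free fields) and the $\curl$-term alone controls nothing in $\od$ without extra structure, while the $R$-term only sees $\oc$. The argument I would use splits $\vn\in V$ via Lemma~\ref{lemmaz}: write $\vn=\zn+(\vn-\zn)$ where $\zn\in\hcurlod$ has the same tangential trace on $\partial\oc$ as $\vn$, vanishing tangential trace on $\partial\Omega$, and satisfies $b(\zn,\cdot)=0$; extend $\zn$ by $\vn|_{\oc}$ on $\oc$ to get a global field. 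Then $\vn-\zn$ (extended by $\cero$ on $\oc$) lies in $\Vd$, where the Friedrichs-type inequality \eqref{poinc} gives $\|\vn-\zn\|_{\hcurlod}\le C\|\curl(\vn-\zn)\|_{\ldosod^3}\le C(\|\curl\vn\|_{\ldoso^3}+\|\curl\zn\|_{\ldosod^3})$; and the norm bound in Lemma~\ref{lemmaz} gives $\|\zn\|_{\hcurlod}\le C\|\vn|_{\oc}\|_{\hcurloc}$, whose right side is controlled by $\|\curl\vn\|_{\L^2(\oc)}^2+\|\vn\|_{\L^2(\oc)}^2\le C(\|\curl\vn\|_{\ldoso^3}^2+\sigma_0^{-1}\langle R\vn,\vn\rangle_Y)$. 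Assembling these, $\|\vn\|_{\hcurlo}^2\le C(\|\curl\vn\|_{\ldoso^3}^2+\langle R\vn,\vn\rangle_Y)\le C(\langle A\vn,\vn\rangle_X+\langle R\vn,\vn\rangle_Y)$ using $\frac1{\mu_1}|\curl\vn|^2\le\frac1\mu|\curl\vn|^2$, which is precisely \eqref{garding1} for a suitable $\gamma$ and $\alpha$. With H1--H6 in hand, Theorem~\ref{main} delivers the unique solution $(\un,\lambda)$ with $\lambda\in\H^1(0,T;\mo)$ and the stated estimate, after squaring and absorbing the $g$- and $u_0$-terms (both zero here).
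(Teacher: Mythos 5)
Your proposal is correct and follows essentially the same route as the paper: you verify H1--H6 for the choices \eqref{defXYM}--\eqref{defb} with $u_0=\cero$, $g=0$, prove H1 by testing with the zero-extension of $\nabla\mu$ plus the Poincar\'e equivalence on $\mo$, and prove H4 by exactly the paper's decomposition $\vn=\E\vn+(\vn-\E\vn)$ built from Lemma~\ref{lemmaz}, the Friedrichs-type inequality \eqref{poinc} on $\Vd$, and the coefficient bounds \eqref{propepsilon}--\eqref{propmu}. No substantive differences or gaps.
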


\begin{proof}
We first notice that if we use the notation given by \eqref{defXYM}--\eqref{defb}, the eddy current formulation the Problem \ref{mix1-int} is a particular case of the mixed parabolic the Problem~\ref{PC1}, where $\fn$ is given by \eqref{f-eddy}, $u_0=\cero$ in $\oc$ and $g=0$. Then, we need to verify the hypothesis of Theorem~\ref{main}. 

It is a simple matter to see the functional spaces and embeddings concerning to the eddy current problem satisfy the conditions of theorem. Consequently, it remains to verify the hypotheses H1--H6 from Section~\ref{continuo}. We only present the proof of H1 and H4, because  the proof of the rest of these properties is straightforward.
\begin{itemize}
\item[H1.] Let $\mu\in\mo$. Thus $\nabla\mu\in\hocurlod$ and furthermore, if we denote $\widetilde{\nabla\mu}$ 
the extension by zero to the whole $\Omega$ of $\nabla\mu$, then $\widetilde{\nabla\mu}\in\hocurlo$.
Consequently, by using \eqref{propepsilon}, for all $\mu\in\mo$ we obtain 
\[
\sup_{\vn\in\hocurlo}\dfrac{b(\vn,\mu)}
{\Vert\vn\Vert_{\hcurlo}}\geq \dfrac{b(\widetilde{\nabla\mu},\mu)}
{\Vert\widetilde{\nabla\mu}\Vert_{\hcurlo}}=
\dfrac{\int_{\od}\varepsilon\nabla\mu\cdot\nabla\mu}{\Vert\nabla\mu\Vert_{\ldosod^3}}
=\varepsilon_0\Vert\nabla\mu\Vert_{\ldosod^3}.
\]
Finally, by recalling that in the space the $\mo$ the standard norm in $\hunod$ is equivalent to its seminorm, we conclude that $b$ satisfies the continuous \textit{inf--sup} condition.
\item[H4.] We need to prove that there exist positive constants $\gamma$ and $\alpha$ such that
\begin{equation}\label{garding2}
\int_{\Omega}\dfrac{1}{\mu}\vert\curl\vn\vert^2 + \gamma\int_{\oc}\sigma\vert\vn\vert^2 
\geq \alpha\norm{\vn}_{\hcurlo}^2\qquad\forall\vn\in V,
\end{equation}
where $V$ is the continuous kernel of $b$ (see \eqref{defV}). 
		
Let $\vn\in V$, $\vc:=\vn\vert_{\oc}$ and $\zn\in\hcurlod$ given by Lemma~\ref{lemmaz}. We define $\E\vn$ as follows
\begin{equation*}%\label{defE}
\E\vn := \left\{
\begin{array}{lll}
\vc,&\ \textrm{ in }\oc,\\
\zn,&\ \textrm{ in }\od.
\end{array}
\right.
\end{equation*}
Then $\E\vn\in\hocurlo$, $b(\E\vn,\mu)=b(\zn,\mu)=0$ for all $\mu\in\mo$ and therefore $\E\vn\in V$. Moreover,
\begin{equation}\label{Econtinuo}
\norm{\E\vn}_{\hcurlo} = \norm{\vc}_{\hcurloc} + \norm{\zn}_{\hcurlod} \leq C\norm{\vc}_{\hcurloc}.
\end{equation}
Thus, if we define $\widetilde\wn:=(\vn-\E\vn)$ then 
\[
\widetilde\wn\in V, \quad\widetilde\wn=\cero\textrm{ in } \oc\quad 
\textrm{and}\quad \widetilde\wn \vert_{\od}\in \Vd.
\]
Hence, from \eqref{poinc} and \eqref{Econtinuo}, it follows that
\begin{multline*}
\norm{\widetilde\wn}_{\hcurlo} = \norm{\widetilde\wn\vert_{\od}}_{\hcurlod} 
\leq C \norm{\left(\curl\widetilde\wn\right)\vert_{\od}}_{\ldosod^3}\\
\leq C \left\{\norm{\curl\E\vn}_{\ldosod^3} + \norm{\curl\vn}_{\ldosod^3}\right\}
\leq C \left\{\norm{\vc}_{\hcurloc} + \norm{\curl\vn}_{\ldosod^3}\right\}.
\end{multline*}
Consequently,
\[
\begin{split}
\norm{\vn}_{\hcurlo}^2 &= \norm{\E\vn + \widetilde\wn}_{\hcurlo}^2
\leq 2\left\{\norm{\E\vn}_{\hcurlo}^2 + \norm{\widetilde\wn}_{\hcurlo}^2\right\}\\
&\leq C\left\{\norm{\vn}_{\hcurloc}^2 + \norm{\curl\vn}_{\ldosod^3}^2\right\}.
\end{split}
\]
Finally, using this previous inequality and recalling \eqref{propepsilon}-\eqref{propmu}, we conclude \eqref{garding2}.
\end{itemize}
\end{proof}

\begin{remark}
The Lagrange multiplier $\lambda$ of the Problem \ref{mix1-int} vanishes identically   \cite[Lemma~4.5]{A1}.
\end{remark}
%-------------------------------------------------------------------
\subsubsection{Input currents}
{The analysis of the well-possednes of the problem with input currents was performed in \cite{blrs:13} by considering an equivalent problem. 
More precisely, it was necessary to use a result previously showed in \cite{BLRS1} for the eddy current problem in terms of the magnetic field $\bH$ and subsequently, 
it was required to extend the electric field $\bE|_{\oc}$ to the insulator domain $\od$. 
In this subsection, we will directly obtain the well-posedness of the problem without the need to resort to an equivalent problem, by showing 
that this formulation is a particular case of the abstract framework studied in Section~\ref{continuo}.

The corresponding strong problem (see \cite{blrs:13}) is
\begin{problem}\label{fuerte_puertos}
Find $\un:\Omega\times [0,T)\rightarrow\Omega$ such that:
\begin{align*}
\sigma\partial_t\un + \curl\left(\frac{1}{\mu}\curl\un\right)=\curl\Hn_0
&\qquad\textrm{in }\Omega\times[0,T],
\\
\curl\un\cdot\nn=0&\qquad\textrm{on }\partial\Omega\times[0,T],
\\
\un\times\nn=\cero &\qquad\textrm{on }\Gc\times[0,T],
\\
\dive(\varepsilon\un)=0&\qquad\textrm{in }\od\times[0,T],
\\
\varepsilon\un(t)\cdot\nn=\int_0^tg(s)ds&\qquad\textrm{on }\Gd\quad t\in[0,T]
\\
\langle\varepsilon\un\cdot\nn,1 \rangle_{\Gamma_I^k} \:=\:0&
\qquad k=2,\dots,M_{I},\qquad \textrm{in }[0,T]\quad 
\\
\langle \varepsilon\un(t)\cdot\nn,1\rangle_{\Gamma_j^n}=\int_0^t I_n(s)ds, &\qquad n=1,\ldots,N,\quad t\in[0,T],
\\
\un(\cdot,0)=\cero&\qquad \text{in $\Omega$}
\end{align*}
\end{problem}

We introduce the following Hilbert spaces 
\begin{align*}
\X&:=\set{\wn\in\hcurlo:\ \wn\times\nn=\cero\textrm{ on }\Gc,\ \curl\wn\cdot\nn=\cero\textrm{ on }\partial\Omega},\\
\M&:=\set{\varphi\in\hunod:\ \varphi\vert_{\GI^1}=0;\ \varphi\vert_{\GI^k}=C_{I}, k=2,\ldots,M_{I}}.
\end{align*}
with their usual norms in $\hcurlo$ and $\H^1(\Omega)$ respectively.

Given $g\in\L^2(0,T;\L^2(\Gamma))$, $I_n\in\H^2(0,T),\,\,n=1,\ldots,N$ and $\Hn_0$ { the initial magnetic condition}, if we denote for $t\in[0,T]$
\begin{equation*}
\left\langle \fn(t),\vn\right\rangle=\int_{\Omega}\fn(t)\cdot\vn :=\sum_{n=1}^N L_n(\vn)(I_n(t)-I_n(0))
+\int_{\Omega}\curl \Hn_0\cdot\vn
\quad \forall \vn\in \X
\end{equation*} 
then, the weak formulation of Problem \ref{fuerte_puertos} (see \cite{blrs:13}){ can read as follows :
}
\begin{problem}\label{mix1-puertos}
Find $\un\in\L^2(0,T;\X)$ and $\lambda\in\L^2(0,T;M)$ such that 
\begin{align*}
&\dfrac{d}{dt}\left[\int_{\oc}\sigma\un(t)\cdot\vn + \int_{\od}\varepsilon\vn\cdot\nabla\lambda(t)\right]
+ \int_{\Omega}\frac{1}{\mu}\curl\un(t)\cdot\curl\vn=\langle \fn(t),\vn\rangle
&& \forall \vn\in \X,
 \\
& \int_{\od}\varepsilon\un(t)\cdot\nabla\mu = \int_{\Gd}\left(\int_0^tg(s)\ds\right)\mu&&\forall \mu\in\M,
\\
&\un(\cdot,0)=\cero\quad \textrm{in }\oc,
\end{align*}
\end{problem} 

We have introduced the time primitive for the multiplier:
\begin{align*}
\lambda(\xn,t)=\int_0^t\xi(\xn,s)\ds\quad \xn\in \od\,,\quad t\in [0,T].
\end{align*}
Our next goal is to { fit}  Problem \ref{mix1-puertos} in the abstract theory  {of} Section {\ref{continuo}}, To this aim,
we will use the operators  $A:\X\to\X$, $R:\L^2(\Omega)^3\to\L^2(\Omega)^3$ and $b:\X\times\M\to\R$
{defined of analogous way to the operators given in  \eqref{defA}-\eqref{defb}.} 

We proceed to show that the Problem \ref{mix1-puertos} is a well-posed problem, then we now introduce the continuous kernel of $b$, which is given by
\begin{align*}
\VV=\left\{\wn\in \X:\,\,b(\wn,\mu)=0\,\,\,\forall\mu\in\M\right\}.
\end{align*}
It is easy show that 
\begin{align*}
\wn\in\VV \quad\Leftrightarrow\quad
\left\{
\begin{array}{ll}
&\wn\in\hcurlo,
\\
&\wn\times\nn=\cero\textrm{ on }\Gc,
\\
&\curl\wn\cdot\nn=\cero\textrm{ on }\partial\Omega,
\\
&\dive(\varepsilon\wn)=0\textrm{ in }\od,
\\
&\varepsilon\wn\cdot\nn=0\textrm{ on }\Gd,
\\
&\int_{\GI^k}\varepsilon\wn\cdot\nn = 0,\quad k=2,\ldots,M_{I}.
\end{array}
\right.
\end{align*}
To continue with the well-posedness of the Problem \ref{mix1-puertos},  it is necessary to introduce the following functional spaces
\begin{align*}
\HH&:=\HGIcurlood\cap\HGddivood,\\
\HHGIcurlod&
:=\set{\wn\in\HGIcurlod:\ \curl\wn\cdot\nn=0 \textrm{ on }\Gd},\\
 \HHdivod & :=\set{\wn\in\hdivod:\ \wn\cdot\nn\vert_{\Gd}\in\LdosGd},
\end{align*}
{equipped with their usual norms.} 
Furthermore, let us denote $\VVd$ as the kernel of $b$ on the space $\HHGIcurlod\cap\HH^\bot$, namely
\begin{align*}
\VVd:= 
\set{\wn\in\HHGIcurlod\cap\HH^\bot:\ b(\wn,\varphi)=0\quad \forall\varphi\in\M}.
\end{align*}
The following result {is directly obtained from} \cite[Proposition 7.1]{FG}.
\begin{lemma}\label{lemmaFG}
There exist a constant $C>0$ such that
\begin{equation*}
\norm{\vn}_{\ldosod^3} \leq C\left\{\norm{\curl\vn}_{\ldosod^3}
+ \norm{\dive(\varepsilon\vn)}_{\ldosod} + \norm{\varepsilon\vn\cdot\nn}_{\LdosGd}
\right\} 
\end{equation*}
for all  $\vn\in \HHGIcurlod\cap\HHdivod\cap\HH^{\bot}$. In particular,
\begin{equation*}
\norm{\vn}_{\ldosod^3} \leq C\norm{\curl\vn}_{\ldosod^3}\qquad\forall\vn\in\VVd.
\end{equation*}
\end{lemma}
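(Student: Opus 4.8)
The plan is to get the first inequality essentially directly from \cite[Proposition~7.1]{FG} and then to derive the ``in particular'' statement by checking that every $\vn\in\VVd$ automatically has vanishing weighted divergence in $\od$ and vanishing weighted normal trace on $\Gd$, so that the right-hand side of the first inequality collapses to $C\norm{\curl\vn}_{\ldosod^3}$. For the first inequality I would first observe that, by \eqref{propepsilon}, $\varepsilon\equiv\varepsilon_0$ in $\od$, whence $\dive(\varepsilon\vn)=\varepsilon_0\dive\vn$ and $\varepsilon\vn\cdot\nn=\varepsilon_0\,\vn\cdot\nn$ on $\Gd$; therefore the weighted Friedrichs-type estimate is equivalent, up to the harmless factor $\varepsilon_0$, to the unweighted one, which is exactly \cite[Proposition~7.1]{FG}. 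The only point worth stressing is the role of the orthogonality condition $\vn\in\HH^\bot$: the space $\HH=\HGIcurlood\cap\HGddivood$ of harmonic (Neumann-type) fields is finite dimensional and is precisely the obstruction to such an estimate, so removing it is necessary. (If a self-contained argument were preferred, this inequality could instead be reproved by the classical Weber compactness argument using the compact embedding into $\ldosod^3$, but citing \cite{FG} suffices.)

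For the second inequality I would take $\vn\in\VVd$, so that by definition $\vn\in\HHGIcurlod\cap\HH^\bot$ and $b(\vn,\varphi)=\int_{\od}\varepsilon\vn\cdot\nabla\varphi=0$ for every $\varphi\in\M$. Testing first with $\varphi\in\mathrm{C}_0^{\infty}(\od)$ --- which belongs to $\M$, since such a $\varphi$ vanishes in a neighbourhood of $\partial\od$ and hence on every $\GI^k$ --- gives $\dive(\varepsilon\vn)=0$ in $\od$ in the sense of distributions; thus $\varepsilon\vn\in\hdivod$, and its normal trace $\varepsilon\vn\cdot\nn$ is a well-defined element of $\H^{-1/2}(\partial\od)$. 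Integrating by parts in the same identity then yields $\langle\varepsilon\vn\cdot\nn,\varphi\rangle_{\partial\od}=0$ for all $\varphi\in\M$; since the trace on $\Gd$ of functions in $\M$ runs over all of $\H^{1/2}(\Gd)$ (the constraints in the definition of $\M$ concern only the interface components $\GI^k$), this forces $\varepsilon\vn\cdot\nn=0$ on $\Gd$. Hence $\vn\cdot\nn\vert_{\Gd}=0\in\LdosGd$, so $\vn\in\HHdivod$ and the first inequality applies to $\vn$; since $\dive(\varepsilon\vn)=0$ and $\varepsilon\vn\cdot\nn\vert_{\Gd}=0$, its right-hand side reduces to $C\norm{\curl\vn}_{\ldosod^3}$, which is the desired estimate on $\VVd$.

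The genuinely delicate step, I expect, is the localization of the normal trace to the relatively open piece $\Gd\subset\partial\od$ --- that is, justifying rigorously that the vanishing of $\langle\varepsilon\vn\cdot\nn,\varphi\rangle_{\partial\od}$ for all $\varphi\in\M$ forces $\varepsilon\vn\cdot\nn=0$ on $\Gd$. This requires handling the duality between the $\H^{-1/2}(\partial\od)$-type trace space and the traces of $\M$, together with a density statement for the latter on $\Gd$. Everything else is bookkeeping, and the core quantitative estimate is imported wholesale from \cite{FG}.
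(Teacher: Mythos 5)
Your proposal is correct and follows essentially the same route as the paper: the paper obtains the first inequality directly by citing \cite[Proposition~7.1]{FG} (using that $\varepsilon\equiv\varepsilon_0$ in $\od$), and the estimate on $\VVd$ then follows exactly as you describe, since $b(\vn,\varphi)=0$ for all $\varphi\in\M$ yields $\dive(\varepsilon\vn)=0$ in $\od$ and $\varepsilon\vn\cdot\nn=0$ on $\Gd$, so $\vn\in\HHGIcurlod\cap\HHdivod\cap\HH^{\bot}$ and the right-hand side collapses. The paper leaves this reduction implicit (it is the same distributional/normal-trace localization argument used to characterize the kernel $\VV$), and your only imprecision --- that traces of $\M$ on $\Gd$ give \emph{all} of $\H^{1/2}(\Gd)$ rather than a dense enough subspace (e.g.\ test functions supported in the relative interior of $\Gd$) --- does not affect the conclusion.
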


\begin{lemma}\label{lemaE}
The lineal mapping $\E:\HGccurloc\to\VV$ 
characterized by 
\begin{equation}\label{caracterizacionE}
\begin{split}
&(\E\vc)\vert_{\oc}=\vc\qquad\forall\vc\in\HGccurloc;\\
&\int_{\od}(\curl\E\vc)\cdot\curl\wnd = 0 \qquad\forall \vc\in\HGccurloc
\quad \forall\wnd\in\VVd.
\end{split}
\end{equation}
is well defined and bounded.
\end{lemma}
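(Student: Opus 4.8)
The plan is to construct $\E\vc$ for a given $\vc \in \HGccurloc$ by mimicking the argument of Lemma~\ref{lemmaz}: first extend the tangential trace of $\vc$ off $\oc$ by a continuous right-inverse of the tangential trace operator on $\od$, then correct the extension by solving an auxiliary well-posed mixed problem so that the divergence constraints defining $\VV$ are met, and finally add a further correction in $\VVd$ to enforce the orthogonality condition in \eqref{caracterizacionE}. Concretely, using the tangential trace operator $\gtd$ on $\hcurlod$ and its continuous right-inverse (as cited in Lemma~\ref{lemmaz}), I would set $\vnd := \gtd^{-1}(\vc\times\nn)$, so that $\vnd\times\nn = \vc\times\nn$ on $\partial\oc$, $\vnd\times\nn = \cero$ on the part of $\partial\Omega$ touching $\od$, and $\norm{\vnd}_{\hcurlod} \le C\norm{\vc}_{\HGccurloc}$. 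This function does not yet satisfy $\dive(\varepsilon\vnd)=0$ nor the flux conditions, so I would solve the mixed problem: find $\wn\in\HGdcurloodGI$-type space and $\rho\in\M$ with $\int_{\od}\curl\wn\cdot\curl\hat\vn + b(\hat\vn,\rho) = -\int_{\od}\curl\vnd\cdot\curl\hat\vn$ and $b(\wn,\mu) = -b(\vnd,\mu)$, whose well-posedness follows from the Poincaré-type inequality in Lemma~\ref{lemmaFG} (coercivity of $(\curl\,\cdot,\curl\,\cdot)$ on $\VVd$) together with the inf--sup condition for $b$ obtained by testing with $\nabla\mu$ exactly as in the proof of H1 above. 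Glueing $\vc$ on $\oc$ with $\wn+\vnd$ on $\od$ then gives a function in $\VV$; a final correction subtracting the $\VVd$-component that carries the nonzero value of $\int_{\od}(\curl(\cdot))\cdot\curl\wnd$ produces the element satisfying both lines of \eqref{caracterizacionE}.

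The key steps in order: (1) record the functional-analytic setup — identify $\HGccurloc$, $\VV$, $\VVd$ and check that the trace space of $\vc\times\nn$ on $\partial\oc$ is compatible with the range of $\gtd$, so that $\vnd$ is well-defined with the stated bound; (2) pose and solve the auxiliary mixed problem for $(\wn,\rho)$, invoking Babuška--Brezzi with coercivity from Lemma~\ref{lemmaFG} and the inf--sup inequality for $b$; (3) define the provisional extension $\widetilde{\E}\vc$ by glueing and verify it lies in $\X$ (tangential traces match across $\partial\oc$, boundary conditions on $\Gc$ and the curl-normal condition on $\partial\Omega$ hold) and in $\VV$ (the $b$-orthogonality is exactly \eqref{defw2} with the contribution of $\vnd$ cancelled); (4) project onto the orthogonality constraint: since $\wnd\mapsto\int_{\od}(\curl\widetilde{\E}\vc)\cdot\curl\wnd$ is a bounded linear functional on $\VVd$, and $\norm{\curl\cdot}_{\ldosod^3}$ is an equivalent norm on $\VVd$ by Lemma~\ref{lemmaFG}, Riesz representation gives a unique $\wn_0\in\VVd$ realizing it; set $\E\vc := \widetilde{\E}\vc - \wn_0$ (extended by zero on $\oc$ for $\wn_0$, which is legitimate since $\wn_0$ vanishes on $\oc$), which still lies in $\VV$, still restricts to $\vc$ on $\oc$, and now satisfies the second identity in \eqref{caracterizacionE}; (5) chain the estimates from steps (1), (2), (4) to get $\norm{\E\vc}_{\X}\le C\norm{\vc}_{\HGccurloc}$, and note linearity of $\E$ follows from uniqueness at each stage.

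The main obstacle I anticipate is the trace-compatibility in step (1): one must be sure that $\vc\times\nn$, for $\vc\in\HGccurloc$, actually lies in the tangential trace space $\mathrm{H}^{-1/2}(\dive_\tau;\partial\oc)$ in a way that is compatible with being a tangential trace on $\partial\od$ as well — i.e.\ that the shared interface $\Sigma=\partial\oc$ (and the way $\Gc$, $\GI^k$, $\Gd$ partition the relevant boundaries) does not obstruct the extension. This is exactly the delicate point handled in Lemma~\ref{lemmaz} for the internal-conductor case via \cite{buffa1-2,buffa}; here the geometry is more involved because the conductor meets $\partial\Omega$ along $\Gc$, so one has to check that the right-inverse $\gtd^{-1}$ can be chosen to also respect the homogeneous tangential condition on $\Gd$-adjacent boundary and that the resulting $\vnd$ has the curl-normal behaviour needed to land in the ambient space. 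A secondary (milder) technical point is verifying that the bilinear form on $\HGdcurloodGI$ in the auxiliary problem is coercive on the correct kernel; this is where Lemma~\ref{lemmaFG} does the heavy lifting, and one should make sure the kernel of $b$ in that auxiliary space coincides with $\VVd$ so that the cited Poincaré inequality applies verbatim.
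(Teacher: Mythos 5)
Your outline follows the same route as the paper's proof: lift the tangential trace of $\vc$ into $\od$ by a continuous right inverse of $\gtd$, correct it by a curl--curl mixed problem with multiplier in $\M$ (Babu\v{s}ka--Brezzi, coercivity from Lemma~\ref{lemmaFG}, inf--sup by testing with gradients), and glue. However, the two points you flag as ``obstacles'' are left unresolved, and they are exactly where the paper's proof does its work. First, the datum fed to $\gtd^{-1}$ cannot be $\vc\times\nn$ itself: $\gtd$ acts on traces on $\partial\od=\GI\cup\Gd$, while $\vc$ is only given on $\oc$. The paper defines $\etan(\vc)$ as $\gtc(\vc)$ on $\GI$ and $\cero$ on $\Gd$; this glued field is a legitimate element of $\rangoGtod$, with $\norm{\etan(\vc)}_{\rangoGtod}=\norm{\gtc(\vc)}_{\rangoGtoc}$, precisely because $\vc\in\HGccurloc$ has vanishing tangential trace on $\Gc$, so the zero extension across the curve where $\GI$ meets $\Gd$ is compatible; then $\LL(\vc):=(\gtd)^{-1}(\etan(\vc))$ lies in $\HGdcurlod$ and matches $\vc$ tangentially on $\GI$. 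Without this step your $\vnd$ is not defined. Second, the auxiliary mixed problem must be posed on $\HHGIcurlod\cap\HH^\bot$, not on a ``$\HGdcurloodGI$-type'' space: intersecting with $\HH^\bot$ is what makes the kernel of $b$ in the auxiliary space equal to $\VVd$, so that Lemma~\ref{lemmaFG} yields coercivity (on any kernel containing the harmonic-type fields of $\HH$ the curl--curl form degenerates), and one must also observe that $\grad\mu\in\HHGIcurlod\cap\HH^\bot$ so that the gradient test is admissible for the inf--sup condition. You correctly anticipate both issues, but the proof consists precisely in settling them, so as written the argument is incomplete.

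A smaller remark: your final Riesz-projection step is superfluous. Once the auxiliary problem is posed on $\HHGIcurlod\cap\HH^\bot$, testing its first equation with $\wnd\in\VVd$ annihilates the term $b(\wnd,\rho)$ and gives $\int_{\od}\curl(\znd+\LL\vc)\cdot\curl\wnd=0$ for all $\wnd\in\VVd$, which is already the second identity in \eqref{caracterizacionE} for the glued field $\E\vc$ (so your corrector $\wn_0$ would vanish). The remaining elements of your plan --- gluing, verification that the result lies in $\VV$, the chained bound $\norm{\E\vc}_{\hcurlo}\leq C\norm{\vc}_{\hcurloc}$ from the continuity of the trace lifting and the stability of the mixed problem, and linearity from uniqueness at each stage --- coincide with the paper's argument.
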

\begin{proof}
Let us denote by $\gtc:\hcurloc\to\rangoGtoc $ and $\gtd:\hcurlod\to\rangoGtod$ the tangential traces on $\hcurloc$ and $\hcurlod$, respectively. We know that in both cases the operators are continuous, surjectives and continuous right inverse  \cite[Theorem 4.1]{buffa}. It follows that linear operator
$\etan:\HGccurloc\to\rangoGtod$ given by 
\begin{equation*}
\etan(\vc):=
\left\{
\begin{array}{ll}
\gtc(\vc)\vert_{\GI}&\textrm{on }\GI,\\
\cero,&\textrm{on }\Gd,
\end{array}
\right.
\end{equation*}
is well defined. Moreover, we have
\begin{equation*}
\norm{\etan(\vc)}_{\rangoGtod} = \norm{\gtc(\vc)}_{\rangoGtoc}\leq C_1\norm{\vc}_{\hcurloc}
\qquad\forall\vc\in\HGccurloc.
\end{equation*}
Futhermore, we define the linear and continuous operator $\LL:\HGccurloc\to\hcurlod$ as
\begin{equation*}
\LL(\vc):=(\gtd)^{-1}(\etan(\vc))\qquad\forall \vc\in\HGccurloc,
\end{equation*}
which satisfies 
\begin{equation*}
\LL(\vc)\in\HGdcurlod,\quad \LL(\vc)\vert_{\GI}\times\nn=\vc\vert_{\GI}\times\nn, \quad \forall \vc\in\HGccurloc.
\end{equation*}

Given $\vc\in\HGccurloc$, we consider the following mixed problem: 
\begin{problem}\label{mixtoinput}
\noindent Find $\znd\in\HHGIcurlod\cap\HH^\bot$ and $\rho\in\M$ such that
\begin{equation*}
\begin{split}
\int_{\od}\curl\znd\cdot\curl\wnd + b(\wnd,\rho) &= - \int_{\od}\curl(\LL(\vc))\cdot\curl\wnd\qquad
\forall\wn\in \HHGIcurlod\cap\HH^\bot\\
b(\znd,\mu) &= -b(\LL(\vc),\mu)\qquad\forall\mu\in\M.
\end{split}
\end{equation*}
\end{problem}
The well-possedness of the Problema \ref{mixtoinput} is in virtue of Babu$\check{\text{s}}$ka-Brezzi. In fact, 
Then by using the Lemma \ref{lemmaFG} we obtain that the 
bilinear form
\[
(\vnd,\wnd)\mapsto\int_{\od}\curl\vnd\cdot\curl\wnd,
\]
is coercive on $\Vd$. Furthermore, the bilinear form $b$ satisfies the \textit{inf--sup}condition{:}
\[
\sup_{\vnd\in\HHGIcurlod\cap\HH^\bot}\dfrac{b(\vnd,\mu)}{\norm{\vnd}_{\hcurlod}}
\geq \dfrac{b(\grad\mu,\mu)}{\norm{\grad\mu}_{\hcurlod}}
= \varepsilon_{0}\norm{\grad\mu}_{\ldosod^3},
\quad \forall\mu\in\M
\]
where, we have used that $\grad\mu\in\HHGIcurlod\cap\HH^\bot$.
Consequently, there exists a unique solution for the Problem \ref{mixtoinput},  namely 
\[
\znd\in\HHGIcurlod\cap\HH^\bot\quad\text{and}\quad
\rho\in\M.
\] 
{Thanks to the} well-posedness of the mixed Problem { \ref{mixtoinput}}  and the continuity of the operator $\LL$, we obtain
\[
\norm{\znd}_{\hcurlod}\leq C_2\norm{\vc}_{\hcurloc}\qquad\forall\vc\in\HGccurloc
\]
and we can notice that there holds
\[
\vc\vert_{\GI}\times\nn = \LL(\vc)\vert_{\GI}\times\nn\quad \text{and}\quad \znd\vert_{\GI}\times\nn=\cero \qquad\forall\vc\in\HGccurloc.
\]
Finally, we define 
\begin{equation*}
\E\vc:=\left\{
\begin{array}{ll}
\vc,&\textrm{in }\oc\\
\znd+\LL\vc,&\textrm{in }\od,
\end{array}
\right.
\end{equation*} 
which satisfies  \eqref{caracterizacionE}.
\end{proof}
\begin{theorem}\label{well-eddy2}
The Problem~\ref{mix1-puertos} has a unique solution $(\un,\lambda)\in\L(0,T;\X)\times {\H^1}(0,T;\M)$ and there exists a constant $C>0$ such that
\[
\|\un\|^2_{\L^2(0,T;\hcurlo)} + 
\|\lambda\|^2_{\L^2(0,T;\hunod)}
\leq C \left(
\sum_{n=1}^N\|I_n\|^2_{\H^1(0,T)}+\|\curl\Hn_0\|_{\ldoso^3}^2+\norm{g}_{\L^2(\Gd)}^2\right).
\]
\end{theorem}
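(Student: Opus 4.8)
The plan is to verify that Problem~\ref{mix1-puertos} fits the abstract framework of Section~\ref{continuo} by checking hypotheses H1--H6 for the concrete choices $\X$, $\M$ and the operators $A$, $R$, $b$ introduced above, and then invoking Theorem~\ref{main}. First I would observe that, as in the proof of Theorem~\ref{well-eddy}, the data identification is $u_0=\cero$, $f(t)=\sum_n L_n(\vn)(I_n(t)-I_n(0)) + \int_\Omega \curl\Hn_0\cdot\vn$, and $g$ is the functional $\mu\mapsto\int_{\Gd} g(s)\,\mu$ (so the right-hand side of the second equation is its time primitive, matching the abstract $g$ in Problem~\ref{PC1}). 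The regularity requirements are met: $f\in\L^2(0,T;\X')$ because $I_n\in\H^2(0,T)\subset\L^2(0,T)$ and $\curl\Hn_0\in\ldoso^3$, and the abstract datum $g$ lies in $\H^1(0,T;\M')$ because the input datum $g\in\L^2(0,T;\L^2(\Gd))$ — so its time primitive is in $\H^1(0,T;\L^2(\Gd))\hookrightarrow\H^1(0,T;\M')$ — which is exactly H6, and H5 ($u_0\in W$) is trivial since $u_0=\cero$. The self-adjointness and monotonicity statements H2 and H3 are immediate from the symmetry of the integrands $\sigma\vn\cdot\wn$ and $\frac1\mu\curl\vn\cdot\curl\wn$ together with $\sigma\ge0$.

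The two substantive points are H1 and H4. For H1 the argument is the same one used in Theorem~\ref{well-eddy}: given $\mu\in\M$, the function $\grad\mu$ (extended by zero to $\oc$) lies in $\X$ — here one uses that $\mu$ is constant on each $\GI^k$ and vanishes on $\GI^1$, plus $\curl(\grad\mu)=\cero$ so the condition $\curl\wn\cdot\nn=0$ on $\partial\Omega$ holds, and $\grad\mu\times\nn=\cero$ on $\Gc$ because $\mu$ lives only on $\od$ — and then
\[
\sup_{\vn\in\X}\frac{b(\vn,\mu)}{\norm{\vn}_{\hcurlo}}
\ge \frac{\int_{\od}\varepsilon\,\grad\mu\cdot\grad\mu}{\norm{\grad\mu}_{\ldosod^3}}
= \varepsilon_0\norm{\grad\mu}_{\ldosod^3}
\ge \beta\norm{\mu}_{\hunod},
\]
the last step using that on $\M$ the full $\H^1(\od)$-norm is equivalent to the seminorm $\norm{\grad\cdot}_{\ldosod^3}$ (a Poincaré-type inequality valid because elements of $\M$ vanish on $\GI^1$).

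The main obstacle, as expected, is H4: the G\r{a}rding inequality
\[
\int_{\Omega}\frac1\mu|\curl\vn|^2 + \gamma\int_{\oc}\sigma|\vn|^2 \ge \alpha\norm{\vn}_{\hcurlo}^2 \qquad\forall\vn\in\VV.
\]
The strategy mirrors the internal-conductor case but with the extension operator $\E$ now supplied by Lemma~\ref{lemaE} and the Poincaré estimate supplied by Lemma~\ref{lemmaFG} instead of \eqref{poinc}. Concretely: given $\vn\in\VV$, set $\vc:=\vn\vert_{\oc}\in\HGccurloc$ and form $\E\vc\in\VV$; by boundedness of $\E$ one has $\norm{\E\vc}_{\hcurlo}\le C\norm{\vc}_{\hcurloc}$. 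Then $\widetilde\wn:=\vn-\E\vc$ satisfies $\widetilde\wn=\cero$ in $\oc$ and $\widetilde\wn\vert_{\od}\in\VVd$ (it lies in $\HHGIcurlod\cap\HH^\bot$ because $\widetilde\wn$ vanishes on $\oc$ hence its tangential trace on $\GI$ is zero and it is $\HH$-orthogonal, and it is in the kernel of $b$ since both $\vn$ and $\E\vc$ are); applying Lemma~\ref{lemmaFG} gives $\norm{\widetilde\wn}_{\hcurlod}\le C\norm{\curl\widetilde\wn}_{\ldosod^3}\le C\{\norm{\vc}_{\hcurloc}+\norm{\curl\vn}_{\ldosod^3}\}$. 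Combining, $\norm{\vn}_{\hcurlo}^2\le 2(\norm{\E\vc}_{\hcurlo}^2+\norm{\widetilde\wn}_{\hcurlo}^2)\le C(\norm{\vn}_{\hcurloc}^2+\norm{\curl\vn}_{\ldosod^3}^2)$, and bounding $\norm{\vn}_{\hcurloc}^2$ by $\norm{\curl\vn}_{\ldosoc^3}^2+\norm{\vn}_{\ldosoc^3}^2$ and using the bounds \eqref{propepsilon}--\eqref{propmu} on $\mu$ and $\sigma$ yields H4 with a suitable $\gamma>0$ and $\alpha>0$. With H1--H6 in hand, Theorem~\ref{main} delivers the unique solution $(\un,\lambda)$ with $\lambda\in\H^1(0,T;\M)$ and the stated a priori bound, after translating the abstract norms $\norm{f}_{\L^2(0,T;\X')}$ and $\norm{g}_{\H^1(0,T;\M')}$ back into $\sum_n\norm{I_n}_{\H^1(0,T)}$, $\norm{\curl\Hn_0}_{\ldoso^3}$ and $\norm{g}_{\L^2(\Gd)}$; the delicate bookkeeping here is that the abstract $\H^1(0,T;\M')$-norm of the datum is controlled by $\norm{g}_{\L^2(0,T;\L^2(\Gd))}$ alone since the primitive picks up one derivative, and the $I_n$-contribution to $f$ needs $I_n\in\H^1(0,T)$, which is why $I_n\in\H^2(0,T)$ was assumed.
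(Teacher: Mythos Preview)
Your proposal is correct and follows essentially the same route as the paper: verify H1 by testing with the (zero-extended) gradient $\nabla\mu$ and a Poincar\'e inequality on $\M$, verify H4 by the extension operator $\E$ of Lemma~\ref{lemaE} together with the Poincar\'e-type estimate of Lemma~\ref{lemmaFG}, and then invoke Theorem~\ref{main}. You supply somewhat more detail than the paper on the data identification and on H2, H3, H5, H6, but the substantive arguments for H1 and H4 match the paper's line for line; your parenthetical justification that $\widetilde\wn|_{\od}\in\HH^\bot$ is no more (and no less) detailed than the paper's bare assertion that $\widetilde\wn|_{\od}\in\VVd$.
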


\begin{proof}
Our goal is to show that the Problem \ref{mixtoinput} satisfies hypotheses given in Section~\ref{continuo}. We only verify the conditions H1 and H4, the proof of the rest of these properties is straightforward. 
\begin{itemize}
\item[H1.] {Let $\mu\in\M$. If we denote $\widetilde{\mu}$ 
the extension of $\mu$ to $\Omega_c$, given by $\widetilde{\mu}|_{\oc^k}=\mu|_{\GI^k}$, $k=2,\ldots,M_{I}$.}
Then $\nabla\widetilde{\mu}\in \X$ and
\[
\sup_{\vn\in\hocurlo}\dfrac{b(\vn,\mu)}
{\Vert\vn\Vert_{\hcurlo}}\geq \dfrac{b(\widetilde{\nabla\mu},\mu)}
{\Vert\widetilde{\nabla\mu}\Vert_{\hcurlo}}=
\dfrac{\int_{\od}\varepsilon\nabla\mu\cdot\nabla\mu}{\Vert\nabla\mu\Vert_{\ldosod^3}}
=\varepsilon_0\Vert\nabla\mu\Vert_{\ldosod^3}.
\]
Finally, thanks to the  Poincar\'e inequality { in $\M$}, it follows that the \textit{inf--sup} condition hold{s} true.
\item[H4.] We need to prove that there exist positive constants $\gamma$ and $\alpha$ such that
\begin{equation}\label{garding3}
\int_{\Omega}\dfrac{1}{\mu}\vert\curl\vn\vert^2 + \gamma\int_{\oc}\sigma\vert\vn\vert^2 
\geq \alpha\norm{\vn}_{\hcurlo}^2\qquad\forall\vn\in \VV,
\end{equation}
where $\VV$ is the continuous kernel of $b$ (see \eqref{defV}). 
		
Let $\vn\in \VV$, $\vc:=\vn\vert_{\oc}$ and $\E\vn$ as in Lemma \ref{lemaE}. Then $\E\vn\in\HGccurloc$, $b(\E\vn,\mu)=0$ for all $\mu\in\M$ and therefore $\E\vn\in \VV$. Thus, if we denote $\widetilde\wn:=(\vn-\E\vn)$ there holds
\[
\widetilde\wn\in \VV, \quad\widetilde\wn=\cero\textrm{ in } \oc\quad 
\textrm{and}\quad \widetilde\wn \vert_{\od}\in \VVd.
\]
Finally, by {proceeding} as in the end of the proof of Theorem \ref{well-eddy}, we conclude the G\r{a}rding inequality {\eqref{garding3}}. 
\end{itemize}
\end{proof}
%---------------------------------------------------------------------
\section{Conclusions}\label{conclusiones}

\begin{itemize}
\item It was possible to join the known theory for (stationary) mixed problems with the theory for linear degenerate parabolic equations.
\item There were obtained sufficient conditions about the involved functional spaces and operators to guarantee the existence and uniqueness of solution for a family of linear mixed degenerate problem.
\item It was possible to apply the developed theory for linear mixed degenerate parabolic equations at least to two different {kinds} of problems: \textit{the classical {time-dependent} Stokes problem}, which is a fundamental model of viscous flow in fluid dynamics; and \textit{the eddy current problem} that is an electromagnetic model that provides a reasonable approximation to the solution of the full Maxwell system in the {low-frequency} range. 
\item The application to the eddy current model includes the case of interior conductors and the case of input currents. The theory allows us to obtain directly the well-posedness 
of the eddy current model for input currents, without the need to resort to another equivalent system of equations. 
\end{itemize}

%---------------------------------------------------------------------
\section*{Acknowledgments}

This work was partially supported by Colciencias through the $727$ call, by University of Cauca  through VRI project ID $5243$ and by Universidad Nacional de Colombia through Hermes project $46332$.
%---------------------------------------------------------------------

%------------------------------------------------------------------
\end{document}